\documentclass[10pt,a4paper]{article}
\usepackage[utf8]{inputenc}
\usepackage{amsmath}
\usepackage{amsfonts}
\usepackage{amssymb}
\usepackage{amsthm}
\usepackage{enumitem}
\usepackage{color}
\usepackage{hyperref}
\usepackage{MnSymbol}
\usepackage{graphicx}

\usepackage[a4paper, textwidth=13cm]{geometry}

\newcommand{\N}{\mathbb{N}}
\newcommand{\R}{\mathbb{R}}
\newcommand{\Z}{\mathbb{Z}}
\newcommand{\oeo}{\Omega_{\epsilon}^1}
\newcommand{\oet}{\Omega_{\epsilon}^2}
\newcommand{\oej}{\Omega_{\epsilon}^j}

\newcommand{\geh}{\Gamma_{\epsilon,h}}
\newcommand{\oeho}{\Omega_{\epsilon,h}^1}
\newcommand{\oeht}{\Omega_{\epsilon,h}^2}
\newcommand{\oehj}{\Omega_{\epsilon,h}^j}
\newcommand{\veps}{v_{\epsilon}}
\renewcommand{\ge}{\Gamma_{\epsilon}}
\newcommand{\ueps}{u_{\epsilon}}
\newcommand{\uepsj}{u_{\epsilon}^j}
\newcommand{\uepst}{u_{\epsilon}^2}
\newcommand{\uepso}{u_{\epsilon}^1}
\newcommand{\depsj}{D^j_{\epsilon}}
\newcommand{\fepsj}{f_{\epsilon}^j}
\newcommand{\hepsj}{h_{\epsilon}^j}
\newcommand{\pepsj}{\phi_{\epsilon}^j}
\newcommand{\peps}{\phi_{\epsilon}}
\newcommand{\fxe}{\frac{x}{\epsilon}}
\newcommand{\ie}{i.\,e.,\,}
\newcommand{\per}{\mathrm{per}}
\newcommand{\ngeps}{\nabla_{\ge}}
\newcommand{\dgepsj}{D_{\ge}^j}
\newcommand{\hd}{\widehat{D}}
\newcommand{\Hjeps}{\mathbb{H}_{j,\epsilon}}
\newcommand{\Hoeps}{\mathbb{H}_{1,\epsilon}}
\newcommand{\Ljeps}{\mathbb{L}_{j,\epsilon}}
\newcommand{\Loeps}{\mathbb{L}_{1,\epsilon}}
\newcommand{\Lteps}{\mathbb{L}_{2,\epsilon}}
\newcommand{\Lj}{\mathbb{L}_j}
\newcommand{\Hj}{\mathbb{H}_j}
\newcommand{\Ho}{\mathbb{H}_1}
\renewcommand{\H}{\mathbb{H}}
\renewcommand{\oe}{\Omega_{\epsilon}}

\renewcommand{\ng}{\nabla_{\Gamma}}
\newcommand{\ngy}{\nabla_{\Gamma,y}}

\newcommand{\eg}{e.\,g.,}
\newcommand{\te}{\mathcal{T}_{\epsilon}}

\newcommand{\Hjepsh}{\mathbb{H}_{j,\epsilon,h}}
\newcommand{\pepsh}{\phi_{\epsilon,h}}
\newcommand{\bpepsh}{\overline{\phi}_{\epsilon,h}}

\newcommand{\Htepsh}{\mathbb{H}_{2,\epsilon,h}}
\newcommand{\Hteps}{\mathbb{H}_{2,\epsilon}}
\newcommand{\Ljepsh}{\mathbb{L}_{j,\epsilon,h}}
\newcommand{\Ltepsh}{\mathbb{L}_{2,\epsilon,h}}
\newcommand{\tveps}{\tilde{v}_{\epsilon}}

\newtheorem{definition}{Definition}
\newtheorem{remark}{Remark}
\newtheorem{theorem}{Theorem}
\newtheorem{proposition}{Proposition}
\newtheorem{lemma}{Lemma}
\newtheorem{corollary}{Corollary}

\title{Homogenization of a two-phase problem with nonlinear dynamic Wentzell-interface condition for connected-disconnected porous media}

\author{M. Gahn}
\date{}

\begin{document}

\maketitle

\begin{abstract}
We investigate a reaction-diffusion problem in a two-component porous medium with a nonlinear interface condition between the different components. One component is connected and the other one is disconnected. 
The ratio between the microscopic pore scale and the size of the whole domain is described by the small parameter $\epsilon$. On the interface between the components we consider a dynamic Wentzell-boundary condition, where the normal fluxes from the bulk domains are given by a reaction-diffusion equation for the traces of the bulk-solutions, including nonlinear reaction-kinetics depending on the solutions on both sides of the interface. Using two-scale techniques, we pass to the limit $\epsilon \to 0$ and derive macroscopic models, where we need homogenization results for surface diffusion. To cope with the nonlinear terms we derive strong two-scale convergence results.
\end{abstract}

\noindent\textbf{Keywords:}
Homogenization; Two-scale convergence; Reaction-diffusion equations; Nonlinear interface conditions; Surface-diffusion.
\\

\noindent\textbf{MSC:}
35K57; 35B27

\let\thefootnote\relax\footnotetext{Interdisciplinary Center for Scientific Computing, University of Heidelberg, Im Neuenheimer Feld
	205, 69120 Heidelberg, Germany, markus.gahn@iwr.uni-heidelberg.de.}

\section{Introduction}

In this paper we derive homogenized models for nonlinear reaction-diffusion problems with dynamic Wentzell-boundary conditions in  multi-component porous media. The domain consists of two components $\oeo$ and $\oet$, where $\oeo$ is connected, and $\oet $ is disconnected and consists of periodically distributed inclusions. The small scaling parameter $\epsilon $ represents the ratio between the length of an inclusion an the size of the whole domain. At the interface $\ge$ between the two components we assume a dynamic Wentzell-boundary condition, \ie the normal flux at the surface is given by a reaction-diffusion equation on $\ge$. More precisely, this boundary/interface condition describes processes like reactions, adsorption, desorption, and diffusion at the interface $\ge$. Further it takes into account exchange of species between the different compartments, what can be modeled by nonlinear reaction-kinetics depending on the solutions on both sides of $\ge$. The aim is the derivation of macroscopic models with homogenized diffusion coefficients for $\epsilon \to 0$, the solution of which is an approximation of the microscopic solution. An additional focus of the paper is to provide general strong two-scale compactness results, which are based on \textit{a priori} estimates for the microscopic solution.

Reaction-diffusion processes play an important role in many applications, and our model is motivated by metabolic and regulatory processes in living cells. Here, an important example is the carbohydrate metabolism in plant cells, where biochemical species are diffusing and reacting within the (connected) cytoplasm and the (disconnected) organelles (like chloroplasts and mitochondria),  and are exchanged between different cellular compartments. At the outer mitochondrial membrane takes place the process of metabolic channeling, where intermediates in metabolic pathways are passed directly from enzyme to enzyme without equilibrating in the bulk-solution phase of the cell \cite{winkel2004metabolic}. This effect can be modeled by the dynamic Wentzell-boundary condition, see \cite[Chapter 4]{GahnDissertation} for more details about the  modeling and the derivation of these boundary conditions, which can be derived by  asymptotic analysis.

To pass to the limit $\epsilon \to 0$ in the variational equation for the microscopic problem we have to cope with several difficulties. The main challenges are the coupled bulk-surface diffusion in the perforated domains, as well as the treatment of the nonlinear terms, especially on the oscillating surface $\ge$. To overcome these problems we make use of the two-scale method in perforated domains and on oscillating surfaces, where we need two-scale compactness results for diffusion processes on surfaces. To pass to the limit in the nonlinear terms we need strong convergence results. Such results are quite standard for the connected domain, but the usual methods fail for the disconnected domain. Here we make use of the unfolding method, which gives us a characterization for the two-scale convergence via functions defined on fixed domains, and a Kolmogorov-Simon-type compactness result for Banach-valued function spaces. Additionally, due to the nonlinear structure of the problem and the weak assumptions on the data, we have to deal with low regularity for the time-derivative.

There exists a large amount of papers dealing with homogenization problems for parabolic equations in multi-component porous media. However, results for the connected-disconnected case for nonlinear problems, especially for nonlinear interface conditions, seem to be rare. In \cite{GahnNeussRaduKnabnerEffectiveModelSubstrateChanneling} and \cite{Gahn} systems of reaction-diffusion problems are considered with nonlinear interface conditions. In \cite{GahnNeussRaduKnabnerEffectiveModelSubstrateChanneling}   surface concentration is included and an additional focus lies on the modeling part of the carbohydrate metabolism and the specific structure of the nonlinear reaction kinetics.  In the present paper we extend those models to problems including an additional surface diffusion for the traces of the bulk-solutions in $\oeo$ and $\oet$. The stationary case for different scalings with a continuous normal flux condition at the interface, given by a nonlinear monotone function depending on the jump of the solutions on both sides, is treated in \cite{DonatoNguyen_HomogenizationOfDiffusion}. There, the nonlinear terms in the disconnected domain only occurs for particular scalings and it is not straightforward to generalize those results to systems.

A double porosity model, where the diffusion inside the disconnected domain is of order $\epsilon^2$, is considered in \cite{BourgeatLuckhausMikelic,Ptashnyk} for continuous transmission conditions at the interface for the solutions and the normal fluxes. In \cite{BourgeatLuckhausMikelic} a nonlinear diffusion coefficient is considered, and the convergence of the nonlinear term is obtained by using the Kirchhoff-transformation and comparing the microscopic and the macroscopic equation, where the last one was obtained by a formal asymptotic expansion. Nonlinear reaction-kinetics in the bulk domains and an additional ordinary differential equation on the interface is considered in \cite{Ptashnyk}, where the strong convergence is proved by showing that the unfolded sequence of the microscopic solution is a Cauchy-sequence. A similar model with different kind of interface conditions is considered in \cite{HornungJaegerMikelic}, where the method of two-scale convergence is used and a variational principle to identify the limits of the nonlinear terms.

To pass to the limit in the diffusive terms on the interface $\ge$ arising from the Wentzell-boundary condition, compactness results for the surface gradient on an oscillating manifolds are needed. For such kind of problems in \cite{AllaireHutridurga2012,GrafPeterHomogenizationManifolds} two-scale compactness results are derived for connected surfaces, where in \cite{GrafPeterHomogenizationManifolds} the method of unfolding is used. Compactness results for a coupled bulk-surface problem when the evolution of the trace of the bulk-solution on the surface $\ge$ is described by a diffusion equation, are treated in \cite{AmarGianni2018,Gahn2019}. In \cite{AmarGianni2018} continuity of the traces across the interface is assumed, where in \cite{Gahn2019} also jumps across the interface are allowed and also compactness results for the disconnected domain $\oet$ are derived. In \cite{AmarGianni2018}, the convergence results are applied to a linear problem with a dynamic Wentzell-interface condition.  A reaction-diffusion problem including dynamic Wentzell-boundary conditions and nonlinear reaction-rates in the bulk domain and on the surface is considered in \cite{Anguiano2020} for a connected perforated domain.

In this paper we start with the microscopic model and establish existence and uniqueness of a weak solution. The appropriate function space for a weak solution is the space of Sobolev functions of first order with $H^1$-traces on the interface $\ge$, which we denote by $\Hjeps$ for $j=1,2$. To pass to the limit $\epsilon \to 0 $ we make use of the method of two-scale convergence for domains and surfaces, see \cite{Allaire_TwoScaleKonvergenz,AllaireDamlamianHornung_TwoScaleBoundary,Neuss_TwoScaleBoundary,Nguetseng}. 
 For the treatment of the diffusive terms on the oscillating surface we use the methods developed in \cite{Gahn2019} for the spaces $\Hjeps$. Those two-scale compactness results are based on \textit{a priori} estimates for the microscopic solution depending explicitly on $\epsilon$. However, to pass to the limit in the nonlinear terms, the usual (weak) two-scale convergence is not enough and we need strong two-scale convergence, what leads to difficulties especially in the disconnected domain $\oet$. The strong convergence is obtained be applying the unfolding operator, see \cite{CioranescuGrisoDamlamian2018} for an overview of the unfolding method, to the microscopic solution and use a Kolmogorov-Simon-type compactness result for the unfolded sequence. We derive a general strong two-scale compactness result that is based only on \textit{a priori} estimates and estimates for the difference between the  solution and  discrete shifts (with respect to the microscopic cells) of the solution. Since we only take into account linear shifts, which are not well defined for general surfaces, we use a Banach-valued Kolmogorov-Simon-compactness result, see \cite{GahnNeussRaduKolmogorovCompactness}. Further, for our microscopic model we only obtain low regularity results for the time-derivative (which is a functional on $\Hjeps$), what leads to additional difficulties in the control of the time variable in the proof of the strong convergence.
 
The paper is organized as follows: In Section \ref{SectionMicroscopicModel} we introduce the geometrical setting and the microscopic model. 
 In Section \ref{SectionExistence} we show existence and uniqueness of a microscopic solution, and derive \textit{a priori} estimates depending explicitly on $\epsilon$. In Section \ref{SectionTwoScaleConvergenceUnfolding} we prove general strong two-scale compactness results for the connected and disconnected domain. In Section \ref{SectionDerivationMacroscopicModel} we state the convergence results for the microscopic solution, formulate the macroscopic model, and show that the limit of the micro-solutions solves the macro-model.
In the Appendix \ref{SectionAppendix} we repeat the definition of the two-scale convergence and the unfolding operator and summarize some well known results from the literature.

\section{The microscopic model}
\label{SectionMicroscopicModel}

In this section we introduce the microscopic problem. We start with the definition of the microscopic domains $\oeo$ and $\oet$, as well as the interface $\ge$, and explain some geometrical properties. Then we state the microscopic equation for given $\epsilon$ and give the assumptions on the data.

\subsection{The microscopic geometry}

Let $\Omega \subset \R^n$ with Lipschitz-boundary and  $\epsilon>0 $  a sequence with $\epsilon^{-1} \in \N$. We define the unit cube $Y:=(0,1)^n$ and $Y_2 \subset Y$ such that $\overline{Y_2} \subset Y$, so $Y_2 $ strictly included in $Y$. Further, we define $Y_1:= Y\setminus \overline{Y_2}$ and $\Gamma := \partial Y_2$, and we suppose that $\Gamma \in C^{1,1}$. We assume that $Y_1$ is connected and for the sake of simplicity we also assume that $Y_2$ is connected. The general case of disconnected $Y_2$ is easily obtained by considering the connected components of $Y_2$, see also Remark \ref{RemarkEnd}. Now, the microscopic domains $\oeo$ and $\oet$ are defined by scaled and shifted reference elements $Y_j$ for $j=1,2$.  Let $K_{\epsilon}:= \{k\in \Z^n \, : \, \epsilon(k + Y) \subset \Omega\}$ and define
\begin{align*}
\oet:= \bigcup_{k\in K_{\epsilon}} \epsilon (Y_2 + k), \quad\quad \oeo := \Omega \setminus \overline{\oet}, \quad\quad \ge := \partial \oet.
\end{align*}
Hence, $\ge $ denotes  the oscillating interface between $\oeo$ and $\oet$. Due to the assumptions on $Y_1$ and $Y_2$ it holds that $\oeo$ is connected and $\oet$ is disconnected, and $\ge \in C^{1,1}$ is not touching the outer boundary $\partial \Omega$.

\subsection{The microscopic model}

We are looking for a solution $(\uepso,\uepst)$ with $\uepsj: (0,T)\times \oej \rightarrow \R$ for $j=1,2$, such that  it holds that
\begin{align}
\begin{aligned}
\label{MicroscopicModel}
\partial_t \uepsj - \nabla \cdot \big(\depsj \nabla \uepsj \big) &= \fepsj(\uepsj) &\mbox{ in }& (0,T)\times \oej,
\\
\epsilon \big( \partial_t \uepsj - \ngeps \cdot \big( \dgepsj \ngeps \uepsj\big) - \hepsj(\uepso , \uepst) \big) &= - \depsj \nabla \uepsj\cdot \nu &\mbox{ on }& (0,T) \times \ge,
\\
-\depsj \nabla \uepsj \cdot \nu &= 0 &\mbox{ on }& (0,T)\times \partial \Omega,
\\
\uepsj(0) &= u_{\epsilon,i}^j &\mbox{ in }& \oej,
\\
\uepsj|_{\ge}(0) &= u_{\epsilon,i,\ge}^j &\mbox{ on }& \ge,
\end{aligned}
\end{align}
where $\nu$ denotes the outer unit normal (we neglect a subscript for the underlying domain, since this should be clear from the context), and $\uepsj|_{\ge}$ denotes the trace of $\uepsj$ on $\ge$.  If it is clear from the context, we use the same notation for a function and its trace, for example we just write $\uepsj $ for $\uepsj|_{\ge}$.
The precise weak formulation of the micro-model above is stated in Section \ref{SectionExistence}, see $\eqref{VariationalEquationMicroscopicProblem}$, after introducing the necessary function spaces.

In the following, with $T_y \Gamma$ and $T_x \ge$ for $y \in \Gamma $ and $x \in \ge$ we denote the tangent spaces of $\Gamma$ at $y$ and $\ge$ at $x$, respectively. The orthogonal projection $P_{\Gamma}(y): \R^n \rightarrow T_y \Gamma $ for $y \in \Gamma$ is given by
\begin{align*}
P_{\Gamma}(y) \xi = \xi - (\xi\cdot \nu(y))\nu(y) \quad\quad \mbox{for } \xi \in \R^n,
\end{align*}
where $\nu(y) $ denotes the outer unit normal at $y\in \Gamma$. Let us extend the unit normal $Y$-periodically. Then, the orthogonal projection $P_{\ge}(x): \R^n \rightarrow T_x \ge $ for $x \in \ge$ is given by
\begin{align*}
P_{\ge}(x)\xi = \xi - \left(\xi \cdot \nu\left(\fxe\right)\right)\nu\left(\fxe\right) \quad \quad \mbox{for } \xi \in \R^n.
\end{align*} 

\textbf{Assumptions on the data:}
\\
In the following let $j \in \{1,2\}$.
\begin{enumerate}
[label = (A\arabic*)]
\item\label{AssumptionDBulk} For the bulk-diffusion we have $\depsj(x):= D^j\left(\fxe\right)$ with $D^j \in L^{\infty}_{\per}(Y_j)^{n\times n}$ symmetric and coercive, \ie there exits $c_0 >0$ such that for almost every $y \in Y_j$
\begin{align*}
D^j(y)\xi \cdot \xi \geq c_0 |\xi|^2 \quad \mbox{ for all } \xi \in \R^n.
\end{align*}
\item\label{AssumptionDSurface} For the surface-diffusion we suppose $\dgepsj(x):= D_{\Gamma}^j\left(\fxe\right) $ with $D_{\Gamma}^j \in  L^{\infty}_{\per}(\Gamma)^{n\times n}$ symmetric and $D_{\Gamma}^j(y)|_{T_y\Gamma}: T_y \Gamma \rightarrow T_y \Gamma$ for almost every $y \in \Gamma$. Further, we assume that $D_{\Gamma}^j$ is coercive, \ie there exists $c_0>0$ such that for almost every $y \in \Gamma$
\begin{align*}
D_{\Gamma}^j(y)\xi\cdot \xi \geq c_0 |\xi|^2 \quad\mbox{ for all } \xi \in T_y \Gamma.
\end{align*}
\item\label{Assumptionfj} For the reaction-rates in the bulk domains we suppose that $\fepsj (t,x,z):= f^j\left(t,\fxe,z\right)$ with $f^j \in L^{\infty}((0,T)\times Y_j \times \R)$ is $Y$-periodic with respect to the second variable and uniformly Lipschitz continuous with respect to the last variable, \ie there exists $C>0$ such that for all $z,w \in \R$ and almost every $(t,y) \in (0,T)\times Y_j$ it holds that
\begin{align*}
|f^j(t,y,z)-f^j(t,y,w)| \le C |z - w|.
\end{align*} 
\item\label{Assumptionhj} For the reaction-rates on the surface we suppose that $\hepsj(t,x,z_1,z_2) := h^j\left(t,\fxe,z_1,z_2\right)$ with $h^j \in L^{\infty}((0,T)\times \Gamma \times \R^2)$ is $Y$-periodic with respect to the second variable and uniformly Lipschitz continuous with respect to the last variable, \ie there exists $C>0$ such that for all $z_1,z_2,w_1,w_2 \in \R$ and almost every $(t,y)\in (0,T)\times \Gamma $ it holds that
\begin{align*}
|h^j(t,y,z_1,z_2)-h^j(t,y,w_1,w_2)| \le C \big(|z_1 - w_1| + |z_2 - w_2|\big).
\end{align*}
\item\label{AssumptionInitialConditions} For the initial conditions we assume $u_{\epsilon,i}^j \in L^2(\oej)$ and $u_{\epsilon,i,\ge}^j \in L^2(\ge)$ and there exists $u_{0,i}^j \in L^2(\Omega) $ and $u_{0,i,\Gamma}^j \in L^2(\Omega)$ such that 
\begin{align*}
u_{\epsilon,i}^j &\rightarrow u_{0,i}^j &\mbox{ in the two-scale sense}&,
\\
u_{\epsilon,i,\ge}^j &\rightarrow u_{0,i,\Gamma}^j &\mbox{ in the two-scale sense on } \ge &,
\end{align*}
and it holds that
\begin{align*}
\sqrt{\epsilon}\|u_{\epsilon,i,\ge}^j\|_{L^2(\ge)} \le C.
\end{align*}
Additionally we assume that the sequences $u_{\epsilon,i}^2$ and $u_{\epsilon,i,\ge}^2$   converge strongly in the two-scale sense.
\end{enumerate}
For the definition of the two-scale convergence see Section \ref{SectionTwoScaleConvergenceUnfolding}. We emphasize that due to the convergences in \ref{AssumptionInitialConditions} it holds that
\begin{align*}
\|u_{\epsilon,i}^j\|_{L^2(\oej)} + \sqrt{\epsilon}\|u_{\epsilon,i,\ge}^j\|_{L^2(\ge)} \le C.
\end{align*}

\section{Existence of a weak solution and \textit{a priori} estimates}
\label{SectionExistence}

The aim of this section is the investigation of the microscopic problem $\eqref{MicroscopicModel}$. We introduce appropriate function spaces and show existence and uniqueness of a microscopic solution. Further, we derive \textit{a priori} estimates for the solution depending explicitly on $\epsilon$. These estimates form the basis for the derivation of the macroscopic problem $\eqref{MacroscopicModel}$ by using the compactness results from Section \ref{SectionTwoScaleConvergenceUnfolding}.

\subsection{Function spaces}
\label{SectionFunctionsSpaces}


Due to the Laplace-Beltrami operator in the boundary condition in $\eqref{MicroscopicModel}$, it is not enough to consider the usual Sobolev space $H^1(\oej)$ as a solution space, because we need more regular traces. This gives rise to deal with the following function spaces for $j=1,2$:
\begin{align}
\begin{aligned}
\label{DefinitionHilbertraum}
\Hjeps&:= \left\{\pepsj \in H^1(\oej)\, : \, \pepsj|_{\ge} \in H^1(\ge)\right\}, 
\\
\Hj &:= \left\{ \phi \in H^1(Y_j) \, : \, \phi|_{\Gamma} \in H^1(\Gamma)\right\},
\end{aligned}
\end{align} 
with the inner products
\begin{align*}
 (\pepsj,\psi_{\epsilon}^j)_{\Hjeps} &:= (\pepsj,\psi_{\epsilon}^j)_{H^1(\oej)} + \epsilon (\pepsj,\psi_{\epsilon}^j)_{H^1(\ge)},
 \\
 (\phi,\psi)_{\Hj} &:= (\phi,\psi)_{H^1(Y_j)} + (\phi,\psi)_{H^1(\Gamma)}.
\end{align*}
The associated norms are denoted by $\|\cdot\|_{\Hjeps}$ and $\|\cdot\|_{\Hj}$. Obviously, the spaces $\Hjeps$ and $\Hj$ are separable Hilbert spaces and we have the dense embeddings $C^{\infty}(\overline{\oej}) \subset \Hjeps$ and $C^{\infty}(\overline{Y_j})\subset \Hj$, see \cite[Proposition 5]{Gahn2019}. We also define the space 
\begin{align*}
\Ljeps := L^2(\oej) \times L^2(\ge), \quad \quad \Lj:= L^2(Y_j)\times L^2(\Gamma)
\end{align*}
with inner products 
\begin{align*}
(\pepsj,\psi_{\epsilon}^j)_{\Ljeps} &:= (\pepsj,\psi_{\epsilon}^j)_{L^2(\oej)} + \epsilon (\pepsj,\psi_{\epsilon}^j)_{L^2(\ge)},
 \\
 (\phi,\psi)_{\Lj} &:= (\phi,\psi)_{L^2(Y_j)} + (\phi,\psi)_{L^2(\Gamma)},
\end{align*}
and again denote the associated norms by $\|\cdot\|_{\Ljeps}$ and $\|\cdot \|_{\Lj}$.
Obviously, we have
\begin{align*}
\Hjeps \overset{\sim}{=} \left\{ (\ueps,\veps) \in H^1(\oej)\times H^1(\ge) \, : \, \ueps|_{\ge} = \veps\right\},
\end{align*}
and a similar result for $\Hj$. Therefore, we have the following Gelfand-triples:
\begin{align*}
\Hjeps \hookrightarrow \Ljeps \hookrightarrow \Hjeps',\quad \quad
\Hj \hookrightarrow \Lj \hookrightarrow \Hj.
\end{align*}
We will also make use for $\alpha \in \left(\frac12,1\right]$ of the function space
\begin{align*}
\Hj^{\alpha}:= \left\{\phi \in H^{\alpha}(Y_j) \, : \, \phi|_{\Gamma} \in H^{\alpha}(\Gamma)\right\}
\end{align*}
with inner product
\begin{align*}
(\phi,\psi)_{\Hj^{\alpha}} :=  (\phi,\psi)_{H^{\alpha}(Y_j)} + (\phi,\psi)_{H^{\alpha}(\Gamma)}.
\end{align*}
By definition we have $\Hj = \Hj^1$. Obviously, we have the compact embedding $\Hj \hookrightarrow \Hj^{\alpha}$ for $\alpha \in \left(\frac12,1\right)$.

\subsection{Existence and uniqueness of a weak solution}

A weak solution of Problem $\eqref{MicroscopicModel}$ is defined in the following way: The tuple $(\uepso,\uepst)$ is a weak solution of $\eqref{MicroscopicModel}$ if for $j=1,2$
\begin{align*}
\uepsj \in L^2((0,T),\Hjeps)\cap H^1((0,T),\Hjeps'),
\end{align*}
$\uepsj$ and $\uepsj|_{\ge}$ fulfill the initial condition $\uepsj(0) = u_{\epsilon,i}^j$ and $\uepsj|_{\ge}(0) = u_{\epsilon,i,\ge}^j$, and for all $\pepsj\in  \Hjeps$ it holds almost everywhere in $(0,T)$
\begin{align}
\begin{aligned}
\label{VariationalEquationMicroscopicProblem}
\langle \partial_t \uepsj , \pepsj&\rangle_{\Hjeps',\Hjeps}  + (\depsj \nabla \uepsj , \nabla \pepsj )_{\oej} + \epsilon (\dgepsj \ngeps \uepsj , \ngeps \pepsj )_{\ge}
\\
&= (\fepsj (\uepsj) , \pepsj)_{\oej}  + \epsilon (\hepsj(\uepso,\uepst),\pepsj)_{\ge}.
\end{aligned}
\end{align}
Here $(\cdot,\cdot)_U$ stands for the inner product on $L^2(U)$, for a suitable set $U \subset \R^n$, and for a Banach space $X$ and its dual $X'$ we write $\langle \cdot , \cdot \rangle_{X',X}$ for the duality pairing between $X'$ and $X$. The scaling factor $\epsilon$ for the time-derivative in $\eqref{MicroscopicModel}$ is included in the duality pairing $\langle \cdot,\cdot \rangle_{\Hjeps',\Hjeps}$. In fact, if additionally it holds that $\partial_t \uepsj \in L^2((0,T),H^1(\oej)')$ and $\partial_t \uepsj|_{\ge} \in L^2((0,T),H^1(\ge)')$ with respect to the Gelfand-triples $H^1(\oej) \hookrightarrow L^2(\oej) \hookrightarrow H^1(\oej)'$ and $H^1(\ge) \hookrightarrow L^2(\ge) \hookrightarrow H^1(\ge)'$, we get for all $\pepsj \in \Hjeps$
\begin{align*}
\langle \partial_t \uepsj , \pepsj \rangle_{\Hjeps',\Hjeps} = \langle \partial_t \uepsj, \pepsj\rangle_{H^1(\oej)',H^1(\oej)} + \epsilon \langle \partial_t \uepsj|_{\ge} , \pepsj|_{\ge} \rangle_{H^1(\ge)',H^1(\ge)}.
\end{align*}

\begin{proposition}\label{PropositionExistenceUniqunessMicroProblem}
There exists a unique weak solution $\ueps = (\uepso,\uepst)$ of the microscopic problem $\eqref{MicroscopicModel}$.
\end{proposition}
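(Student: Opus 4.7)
The plan is to use a Galerkin approximation in the separable Hilbert spaces $\Hoeps$ and $\Hteps$, together with standard monotonicity/Lipschitz arguments, since the system is semilinear with uniformly Lipschitz and bounded nonlinearities on both bulk and surface.

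First I would choose bases $\{w_k^j\}_{k \in \N}$ of $\Hjeps$ that are orthogonal in $\Hjeps$ and orthonormal in $\Ljeps$ (possible since both are separable Hilbert spaces with the dense embedding $\Hjeps \hookrightarrow \Ljeps$), and look for approximate solutions $\ueps^{j,N}(t) = \sum_{k=1}^N c_k^{j,N}(t) w_k^j$ satisfying the finite-dimensional version of \eqref{VariationalEquationMicroscopicProblem}. Because $f^j$ and $h^j$ are uniformly Lipschitz, the resulting coupled ODE system in $(c_k^{1,N}, c_k^{2,N})$ is Lipschitz in its unknowns, so Picard--Lindelöf gives local existence, and the a priori estimate below will give global existence on $(0,T)$.

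Next I would derive the basic energy estimate by testing with $\ueps^{j,N}$ itself. Adding the equations for $j=1,2$, the bulk- and surface-diffusion terms give, by coercivity in \ref{AssumptionDBulk} and \ref{AssumptionDSurface}, a positive term controlling $\|\nabla \ueps^{j,N}\|_{L^2(\oej)}^2 + \epsilon \|\ngeps \ueps^{j,N}\|_{L^2(\ge)}^2$. The reaction contributions are estimated using boundedness of $f^j,h^j$ (or Lipschitz continuity together with $f^j(\cdot,\cdot,0), h^j(\cdot,\cdot,0,0) \in L^\infty$), Young's inequality, and the initial bound on $\|u_{\epsilon,i}^j\|_{L^2(\oej)} + \sqrt{\epsilon}\|u_{\epsilon,i,\ge}^j\|_{L^2(\ge)}$ from \ref{AssumptionInitialConditions}. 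Gronwall then yields a bound on $\ueps^{j,N}$ in $L^\infty((0,T),\Ljeps) \cap L^2((0,T),\Hjeps)$ uniform in $N$. From the variational equation itself, using $\|\fepsj(\ueps^{j,N})\|_{L^2(\oej)} + \sqrt{\epsilon}\|\hepsj(\ueps^{1,N},\ueps^{2,N})\|_{L^2(\ge)} \le C$ and duality, I obtain $\partial_t \ueps^{j,N}$ bounded in $L^2((0,T),\Hjeps')$.

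I would then pass to the limit $N \to \infty$: weak-$\ast$ compactness gives $\ueps^{j,N} \rightharpoonup \uepsj$ weakly in $L^2((0,T),\Hjeps)$ with $\partial_t \uepsj \in L^2((0,T),\Hjeps')$. Because $\Hjeps \hookrightarrow \Ljeps$ is compact (the bulk part by Rellich, the surface part by Rellich on $\ge$), the Aubin--Lions lemma yields strong convergence $\ueps^{j,N} \to \uepsj$ in $L^2((0,T),\Ljeps)$, which together with the Lipschitz continuity in \ref{Assumptionfj} and \ref{Assumptionhj} lets me pass to the limit in the nonlinear terms $\fepsj(\ueps^{j,N})$ in $L^2((0,T)\times \oej)$ and $\hepsj(\ueps^{1,N},\ueps^{2,N})$ in $L^2((0,T)\times \ge)$. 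The initial conditions are recovered from $\ueps^{j,N} \in C([0,T],\Ljeps)$ (a consequence of the Lions--Magenes lemma applied to the Gelfand triple $\Hjeps \hookrightarrow \Ljeps \hookrightarrow \Hjeps'$) by choosing the approximations of the initial data in the $\Ljeps$-norm.

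For uniqueness I would take two weak solutions $\ueps,\tveps$, subtract the equations, test with $\ueps-\tveps$ (using $\frac{1}{2}\frac{d}{dt}\|\ueps-\tveps\|_{\Ljeps}^2 = \langle \partial_t(\ueps-\tveps),\ueps-\tveps\rangle_{\Hjeps',\Hjeps}$), drop the nonnegative diffusion terms, absorb the Lipschitz right-hand sides via Young's inequality, and apply Gronwall. The main technical point — really the only subtlety — is that the time derivative lives in $\Hjeps'$ (not the separate bulk/surface duals), so the duality pairing bundles together the bulk and the $\epsilon$-scaled surface $L^2$-pairings; keeping track of this consistently, and using the Lions--Magenes identity precisely in this Gelfand triple, is what makes the argument go through cleanly. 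Otherwise the proof is entirely standard.
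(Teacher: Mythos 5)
Your proposal is correct and follows essentially the same route as the paper, which establishes this proposition via the Galerkin method combined with the energy estimates of Proposition \ref{PropositionAprioriEstimates} and standard energy estimates for uniqueness, delegating the details to the author's dissertation. The only minor divergence is that the paper invokes the Leray--Schauder principle for the approximating system, whereas you solve the Galerkin ODEs directly by Picard--Lindel\"of and pass to the limit via Aubin--Lions; this is legitimate here because \ref{Assumptionfj} and \ref{Assumptionhj} make the nonlinearities globally Lipschitz.
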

\begin{proof}
This is an easy consequence of the Galerkin-method and the Leray-Schauder principle, where we have to use similar estimates as in Proposition \ref{PropositionAprioriEstimates} below. The uniqueness follows from standard energy estimates. For more details see \cite{GahnDissertation}.
\end{proof}

\subsection{\textit{A priori} estimates}
\label{SectionAprioriEstimates}

We derive \textit{a priori} estimates for the microscopic solution depending explicitly on $\epsilon$. These estimates are necessary for the application of the two-scale compactness results from Section \ref{SectionTwoScaleConvergenceUnfolding} to derive the macroscopic model. In a first step, we give estimates in the spaces $L^2((0,T),\Hjeps)$ and $H^1((0,T),\Hjeps')$. Such kind of estimates are also needed to establish the existence of a weak solution via the Galerkin-method. In a second step, we derive estimates for the difference of shifted microscopic solution with respect to the macroscopic variable. These estimates are necessary for strong two-scale compactness results in the disconnected domain.

The following trace inequality for perforated domains will be used frequently throughout the paper and follows easily by a standard decomposition argument and the trace inequality on the reference element $Y_j$, see also \cite[Theorem II.4.1 and Exercise II.4.1]{Galdi}: For every $\theta >0$ there exists a $C(\theta)>0$ such that  for every $\veps \in H^1(\oej)$ it holds that
\begin{align}\label{TraceInequality}
\|\veps\|_{L^2(\ge)} \le \frac{C(\theta)}{\sqrt{\epsilon}} \|\veps\|_{L^2(\oej)} + \theta \sqrt{\epsilon} \|\nabla \veps \|_{L^2(\oej)}. 
\end{align}
%

\begin{proposition}\label{PropositionAprioriEstimates}
The weak solution $\ueps = (\uepso,\uepst)$ of the microscopic problem $\eqref{MicroscopicModel}$ fulfills the following \textit{a priori} estimate
\begin{align*}
\|\partial_t \uepsj\|_{L^2((0,T),\Hjeps')} + \|\uepsj\|_{L^2((0,T),\Hjeps)} \le C,
\end{align*}
for a constant $C > 0$ independent of $\epsilon$.
\end{proposition}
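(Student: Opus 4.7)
The strategy is the standard energy method adapted to the $\epsilon$-scaled function spaces $\Hjeps$ and $\Ljeps$, followed by a duality argument for the time derivative. Since the existence proof referenced in Proposition \ref{PropositionExistenceUniqunessMicroProblem} already relies on such estimates at the Galerkin level, I will work directly with the weak solution and justify each step using the regularity $\uepsj \in L^2((0,T),\Hjeps)\cap H^1((0,T),\Hjeps')$.

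First, I would test the variational equation \eqref{VariationalEquationMicroscopicProblem} with $\pepsj = \uepsj$. The scaling built into $\langle\cdot,\cdot\rangle_{\Hjeps',\Hjeps}$ ensures that the duality term becomes $\tfrac{1}{2}\tfrac{d}{dt}\|\uepsj\|_{\Ljeps}^2$, so using coercivity \ref{AssumptionDBulk} and \ref{AssumptionDSurface} I obtain
\begin{align*}
\tfrac{1}{2}\tfrac{d}{dt}\|\uepsj\|_{\Ljeps}^2 + c_0\bigl(\|\nabla \uepsj\|_{L^2(\oej)}^2 + \epsilon \|\ngeps \uepsj\|_{L^2(\ge)}^2\bigr) \le (\fepsj(\uepsj),\uepsj)_{\oej} + \epsilon(\hepsj(\uepso,\uepst),\uepsj)_{\ge}.
\end{align*}
For the bulk reaction term, boundedness of $f^j$ from \ref{Assumptionfj} together with Young's inequality yields a bound by $C + \tfrac{1}{2}\|\uepsj\|_{L^2(\oej)}^2$. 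For the surface reaction term, boundedness of $h^j$ and $|\ge|\le C\epsilon^{-1}$ give $\epsilon\|\hepsj\|_{L^2(\ge)} \le C\sqrt{\epsilon}$; Young's inequality then yields $\epsilon(\hepsj,\uepsj)_{\ge} \le C + \tfrac{\epsilon}{2}\|\uepsj\|_{L^2(\ge)}^2$, whose second piece is just $\tfrac{1}{2}\|\uepsj\|_{\Ljeps}^2$-admissible. Gronwall's lemma, with the uniform initial bound from \ref{AssumptionInitialConditions}, gives $\sup_t \|\uepsj(t)\|_{\Ljeps}^2 \le C$, and integrating in time yields the bound on $\|\uepsj\|_{L^2((0,T),\Hjeps)}$.

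For the time-derivative estimate, I pick an arbitrary $\pepsj \in \Hjeps$ with $\|\pepsj\|_{\Hjeps}\le 1$ and bound each term on the right of \eqref{VariationalEquationMicroscopicProblem} by Cauchy--Schwarz. The bulk diffusion term is controlled by $C\|\nabla \uepsj\|_{L^2(\oej)}$, and the surface diffusion term by writing $\epsilon(\dgepsj\ngeps\uepsj,\ngeps\pepsj)_{\ge} = \sqrt{\epsilon}\|\dgepsj\ngeps\uepsj\|_{L^2(\ge)}\cdot\sqrt{\epsilon}\|\ngeps\pepsj\|_{L^2(\ge)} \le C\sqrt{\epsilon}\|\ngeps\uepsj\|_{L^2(\ge)}$, which is square-integrable in time by the previous step. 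The bulk reaction contributes at most $C$ by $L^\infty$-boundedness, while the surface reaction gives $\epsilon(\hepsj,\pepsj)_{\ge} \le \sqrt{\epsilon}\|\hepsj\|_{L^2(\ge)}\cdot\sqrt{\epsilon}\|\pepsj\|_{L^2(\ge)} \le C$ using again $|\ge|\le C\epsilon^{-1}$ and the embedding $\sqrt{\epsilon}\|\cdot\|_{L^2(\ge)}\le \|\cdot\|_{\Ljeps}\le \|\cdot\|_{\Hjeps}$. Taking the supremum over $\pepsj$, squaring, and integrating in time gives the desired bound on $\|\partial_t\uepsj\|_{L^2((0,T),\Hjeps')}$.

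The only mildly delicate point is bookkeeping the $\epsilon$-powers so that all surface quantities appear with exactly the scaling that matches either $\Ljeps$ or $\Hjeps$; this is ensured by the factor $\epsilon$ in front of every surface integral in \eqref{VariationalEquationMicroscopicProblem} together with $|\ge|\le C\epsilon^{-1}$. The trace inequality \eqref{TraceInequality} is not strictly needed for this proposition because the $L^\infty$-bound on $h^j$ already makes the surface reaction harmless, but it would be the natural tool if $h^j$ were only of linear growth.
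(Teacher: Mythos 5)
Your proposal is correct and follows essentially the same route as the paper: test with $\uepsj$, use coercivity and the $L^\infty$/Lipschitz bounds on the reaction terms, apply Gronwall, then estimate $\partial_t\uepsj$ by duality against $\|\pepsj\|_{\Hjeps}\le 1$. The only (harmless) difference is that the paper keeps the coupling of $\uepso$ and $\uepst$ through $h^j$ and therefore sums the differential inequalities over $j=1,2$ before applying Gronwall, whereas your use of the $L^\infty$-bound on $h^j$ decouples the two estimates; also note your surface-diffusion bound should read as an inequality rather than an equality.
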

\begin{proof}
We choose $\uepsj$ as a test-function in $\eqref{SectionExistence}$ (for $j=1,2$) to obtain with the Assumptions \ref{Assumptionfj} and  \ref{Assumptionhj} on $f^j$ and $h^j$
\begin{align*}
\frac12 \frac{d}{dt} \Vert \uepsj  \Vert_{\Ljeps}^2 +  (\depsj & \nabla \uepsj , \nabla \uepsj  )_{\oej} + \epsilon (\dgepsj \ngeps \uepsj , \ngeps \uepsj )_{\ge}
\\
&= (\fepsj (\uepsj) , \uepsj)_{\oej}  + \epsilon (\hepsj(\uepso,\uepst),\uepsj)_{\ge}
\\
&\le C\left(1 + \Vert \uepsj\Vert_{L^2(\oej)}^2 + \epsilon\Vert \uepso\Vert^2_{L^2(\ge)} + \epsilon \Vert \uepst\Vert^2_{L^2(\ge)} \right)
\\
&\le C\left(1 + \Vert \uepso\Vert^2_{\Loeps} + \Vert \uepst\Vert^2_{\Lteps}\right).
\end{align*} 
Using the coercivity of $\depsj$ and $\dgepsj$ from the Assumptions \ref{AssumptionDBulk} and \ref{AssumptionDSurface}, we obtain for $j=1,2$
\begin{align*}
\frac{d}{dt}\Vert \uepsj\Vert_{\Ljeps}^2 + \Vert \nabla \uepsj\Vert_{L^2(\oej)}^2 + \epsilon \Vert \ngeps \uepsj \Vert_{L^2(\ge)}^2 \le C\left(1 + \Vert \uepso\Vert^2_{\Loeps} + \Vert \uepst\Vert^2_{\Lteps}\right).
\end{align*}
Summing over $j=1,2$, integrating with respect to time, the Assumption \ref{AssumptionInitialConditions}, and the Gronwall-inequality implies the boundedness of $\Vert \uepsj\Vert_{L^2((0,T),\Hjeps)}$ uniformly with respect to $\epsilon$.

It remains to check the   bound for the time-derivative $\partial_t \uepsj$. As a test-function in $\eqref{VariationalEquationMicroscopicProblem}$ we choose $\pepsj \in \Hjeps$ with $\Vert \pepsj \Vert_{\Hjeps} \le 1$ to obtain (using the boundedness of the diffusion tensors and again the growth condition for $h^j$ and $f^j$):
\begin{align*}
\langle \partial_t \uepsj , \pepsj \rangle_{\Hjeps',\Hjeps} \le C\left( \Vert \uepso \Vert_{\Hoeps} + \Vert \uepst \Vert_{\Hteps}  \right)\Vert \pepsj\Vert_{\Hjeps}   
\le  C\left( \Vert \uepso \Vert_{\Hoeps} + \Vert \uepst \Vert_{\Hteps}  \right).
\end{align*}
Squaring, integrating with respect to time, and the boundedness of $\uepsj$ for $j=1,2$ already obtained above implies the desired result.
\end{proof}

Next, we derive estimates for the difference of the shifted functions. 
First of all, we introduce some additional notations. 
For $h>0$ let us define 
\begin{align*}
\Omega_h &:= \{x\in \Omega \, : \, \mathrm{dist}(x,\partial \Omega) > h \} ,
\\
K_{\epsilon,h} &:= \{k\in \Z^n\, : \, \epsilon(Y + k) \subset \Omega_h\},
\\
\Omega_{\epsilon,h} &:= \mathrm{int} \bigcup_{k \in K_{\epsilon,h}} \epsilon \big(\overline{Y} + k \big),
\end{align*}
and the related perforated domains and the related surface
\begin{align*}
\oeht&:=  \bigcup_{k \in K_{\epsilon,h}} \epsilon\big(Y_2 + k \big), \quad \oeho := \Omega_{\epsilon,h} \setminus \overline{\oeht},\quad
\geh := \partial \oeht.
\end{align*}
For $l \in \Z^n $ with $|l\epsilon|< h$ and $G_{\epsilon,h} \in \{\Omega_{\epsilon,h} , \oeho,\oeht\}$, we define for an arbitrary function $\veps : G_{\epsilon,h} \rightarrow \R$ the shifted function
\begin{align*}
\veps^l(x):= \veps(x + l\epsilon),
\end{align*} 
and the difference between the shifted function and the function itself
\begin{align} \label{DefinitionDifference}
\delta_l \veps(x):=\delta \veps(x):= \veps^l(x) - \veps(x) = \veps (x+l\epsilon) - \veps(x).
\end{align}
Here, in the writing $\delta \veps$ we neglect the dependence on $l$ if it is clear from the context. Further, we define $\Hjepsh$ in the same way as $\Hjeps$ in $\eqref{DefinitionHilbertraum}$ by replacing $\oej$ and $\ge$ with $\oehj$ and $\geh$. In the same way we define $\Ljepsh$. 
Further, for any function $\pepsh\in \Htepsh$ we write $\bpepsh $ for the zero extension to $\oet$. Especially it holds that $\bpepsh \in \Hteps$, since $\oet$ is disconnected.


\begin{proposition}\label{PropositionEstimateShifts}
Let $0 <h \ll 1$, then for all $l \in \Z^n$  with $|\epsilon l|<h$, it holds that
\begin{align*}
\|\delta \uepst\|_{L^{\infty}((0,T),\Ltepsh)} + \|&\nabla \delta \uepst \|_{L^2((0,T),\Ltepsh)} 
\\
&\le C \left( \|\delta \uepso \|_{L^2((0,T),L^2(\oeho))} + \big\|\delta \big(u_{\epsilon,i}^2,u_{\epsilon,i,\ge}^2\big)\big\|_{\Ltepsh} + \epsilon  \right),
\end{align*}
for a constant $C>0$ independent of $h,\, \epsilon,$ and $l$.
\end{proposition}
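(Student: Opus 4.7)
The plan is to exploit the disconnected geometry of $\oet$ together with the $\epsilon\Z^n$-periodicity of the coefficients. Since $l\in\Z^n$ with $|l\epsilon|<h$, the shift $x\mapsto x+l\epsilon$ maps $\oeht$ into $\oet$, sending each $\epsilon$-inclusion of $\oeht$ onto an inclusion of $\oet$. Moreover, the functions $D^2,D_\Gamma^2,f^2(t,\cdot,z),h^2(t,\cdot,z_1,z_2)$ are $Y$-periodic in the fast variable, so the $\epsilon$-rescaled coefficients are invariant under this shift. Therefore $\uepst^l$ satisfies on $\oeht$ the same PDE as $\uepst$, with the reaction on the interface coupled to the shifted trace $\uepso^l$. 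Subtracting the two weak equations yields a linear problem for $\delta\uepst$ with bulk source $\delta f^2:=f_\epsilon^2(\uepst^l)-f_\epsilon^2(\uepst)$ and surface source $\delta h^2:=h_\epsilon^2(\uepso^l,\uepst^l)-h_\epsilon^2(\uepso,\uepst)$.

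Because $\oet$ is disconnected and $\oeht$ is a union of whole scaled inclusions, any function in $\Htepsh$ extended by zero to $\oet$ lies in $\Hteps$. In particular, the zero extension of $\delta\uepst$ is an admissible test function in \eqref{VariationalEquationMicroscopicProblem}. Testing with it and invoking coercivity from \ref{AssumptionDBulk}, \ref{AssumptionDSurface} plus the Lipschitz bounds \ref{Assumptionfj}, \ref{Assumptionhj} gives the energy inequality
\begin{align*}
\frac{1}{2}\frac{d}{dt}\|\delta\uepst\|_{\Ltepsh}^2 &+ c_0\|\nabla\delta\uepst\|_{L^2(\oeht)}^2 + c_0\epsilon\|\ngeps\delta\uepst\|_{L^2(\geh)}^2 \\
&\le C\|\delta\uepst\|_{L^2(\oeht)}^2 + C\epsilon\bigl(\|\delta\uepso\|_{L^2(\geh)}^2+\|\delta\uepst\|_{L^2(\geh)}^2\bigr).
\end{align*}

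The genuine obstacle is the term $\epsilon\|\delta\uepso\|_{L^2(\geh)}^2$: the connected-phase solution has no direct surface control after differencing, and its gradient is only bounded by the \textit{a priori} estimate. The remedy is the $\epsilon$-uniform trace inequality \eqref{TraceInequality}, which after squaring gives for $j=1,2$
\begin{align*}
\epsilon\|\delta\uepsj\|_{L^2(\geh)}^2 \le C(\theta)\|\delta\uepsj\|_{L^2(\oehj)}^2 + \theta\,\epsilon^2\|\nabla\delta\uepsj\|_{L^2(\oehj)}^2.
\end{align*}
For $j=2$ the last term is absorbed into $c_0\|\nabla\delta\uepst\|^2$ on the left by choosing $\theta$ sufficiently small (and using $\epsilon\le 1$). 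For $j=1$ it contributes $O(\epsilon^2)$ after time integration, thanks to $\|\nabla\uepso\|_{L^2((0,T)\times\oeo)}\le C$ from Proposition \ref{PropositionAprioriEstimates} applied to both $\uepso$ and its shift $\uepso^l$. Integrating in time and applying Gronwall's inequality then converts the remaining $\|\delta\uepst\|_{L^2(\oeht)}^2$ and $\|\delta\uepso\|_{L^2(\oeho)}^2$ contributions into the stated $L^\infty_t L^2_x$ and $L^2_{t,x}$ bounds; the factor $\epsilon$ on the right-hand side arises as the square root of the $O(\epsilon^2)$ remainder.
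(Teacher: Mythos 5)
Your proposal is correct and follows essentially the same route as the paper: exploit periodicity and disconnectedness so that the zero extension of a shifted test function is admissible in \eqref{VariationalEquationMicroscopicProblem}, subtract the two weak formulations, test with $\delta\uepst$, control the surface terms via the trace inequality \eqref{TraceInequality} (absorbing the $j=2$ gradient, bounding the $j=1$ gradient by Proposition \ref{PropositionAprioriEstimates}), and conclude with Gronwall. The only point you pass over is the justification that the shifted function has a well-defined time derivative in $L^2((0,T),\Htepsh')$ together with the duality identity $\langle\partial_t\ueps^{2,l},\pepsh\rangle=\langle\partial_t\uepst,\bpepsh^{-l}\rangle$ — a technical step the paper carries out explicitly since $\partial_t\uepst$ is only a functional on $\Hteps$ — but your accounting of the $O(\epsilon^2)$ remainder and the resulting $+\epsilon$ term is accurate.
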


\begin{proof}
Let $0 <h \ll 1$ and $l \in \Z^n$  with $|\epsilon l|<h$, and we shortly write $\ueps^{2,l} := \left(\uepst\vert_{\oeht}\right)^l$, \ie the shifts with respect to $l\epsilon$ of the restriction $\uepst\vert_{\oeht}$ (we neglect the index $h$). In the same way we define $\ueps^{1,l}$. 
Let $\pepsh \in \Htepsh$. Then, for $x \in \oet \setminus (\oeht + \epsilon l) $ it holds that $x - l\epsilon \notin \oeht$ and therefore $\bpepsh^{-l}(x) = 0$ and similar from $x \in \ge \setminus(\geh + \epsilon l)$ it follows $\bpepsh^{-l}(x) = 0$ . This implies for all $\psi \in C_0^{\infty}(0,T)$
\begin{align*}
\int_0^T &\left(\ueps^{2,l},\pepsh\right)_{\Ltepsh} \psi'(t) dt 
\\
&= \int_0^T \left[ \int_{\oeht} \uepst (t, x + l\epsilon) \pepsh(x) dx + \epsilon \int_{\geh} \uepst(t,x+l\epsilon) \pepsh(x) d\sigma\right] \psi'(t) dt
\\
&= \int_0^T \left[ \int_{\oeht + l \epsilon  } \uepst(t,x) \pepsh^{-l}(x) dx + \epsilon \int_{\geh + l \epsilon} \uepst(t,x) \pepsh^{-l}(x) d\sigma\right] \psi'(t) dt
\\
&= \int_0^T \left[ \int_{\oet  } \uepst(t,x) \pepsh^{-l}(x) dx + \epsilon \int_{\ge } \uepst(t,x) \pepsh^{-l}(x) d\sigma\right] \psi'(t) dt
\\
&= \int_0^T \left(\uepst , \bpepsh^{-l} \right)_{\Lteps} \psi'(t) dt = - \int_0^T \left\langle \partial_t \uepst , \bpepsh^{-l} \right\rangle_{\Hteps',\Hteps} \psi(t) dt .
\end{align*}
Hence, we have $\partial_t \ueps^{2,l} \in L^2((0,T),\Htepsh')$ with
\begin{align*}
\big\langle \partial_t \ueps^{2,l} , \pepsh \big\rangle_{\Htepsh',\Htepsh} = \big\langle \partial_t \uepst , \bpepsh^{-l} \big\rangle_{\Hteps',\Hteps}
\end{align*}
almost everywhere in $(0,T)$. Using $\bpepsh^{-l} \in \Hteps$ as a test-function in $\eqref{VariationalEquationMicroscopicProblem}$, we obtain using the periodicity of $D^2$, $D^2_{\Gamma}$, $f^2$, and $h^2$, by an elemental calculation
\begin{align*}
\big\langle \partial_t \uepst , \bpepsh^{-l} \big\rangle_{\Hteps',\Hteps} =& -\big(D_{\epsilon}^2 \nabla \ueps^{2,l} , \nabla \pepsh \big)_{\oeht} - \epsilon \big(D_{\ge}^2 \ngeps \ueps^{2,l} , \ngeps \pepsh\big)_{\geh}
\\
&+ \big(f_{\epsilon}^2(\ueps^{2,l}) , \pepsh \big)_{\oeht} + \epsilon \big(h_{\epsilon}^2(\ueps^{1,l},\ueps^{2,l}) , \pepsh \big)_{\geh}.
\end{align*}
Subtracting the above equation for $l=0$ and arbitrary $l\in \Z^n$ with $|\epsilon l|<h$ we obtain
\begin{align*}
\langle \partial_t& \delta \uepst,\pepsh\rangle_{\Htepsh',\Htepsh} + \big(D_{\epsilon}^2 \nabla \delta  \uepst, \nabla \pepsh\big)_{\oeht} + \epsilon \big(D_{\ge}^2 \ngeps \delta \uepst , \ngeps \pepsh \big)_{\geh} 
\\
&= \big(f_{\epsilon}^2\big((\uepst)^l\big) - f_{\epsilon}^2(\uepst),\pepsh\big)_{\oeht} + \epsilon \big(h_{\epsilon}^2\big((\uepso)^l,(\uepst)^l\big) - h_{\epsilon}^2(\uepso,\uepst) , \pepsh\big)_{\geh}.
\end{align*}
Choosing $\pepsh:=\delta \uepst$ (more precisely we take the restriction of $\uepst$ to $\oeht$) we obtain with the coercivity of $D^2$ and $D^2_{\Gamma}$, as well as the Lipschitz continuity of $f^2$ and $h^2$
\begin{align*}
\frac12 \frac{d}{dt} \|\delta \uepst \|^2_{\Ltepsh} + c_0 \|\nabla \delta \uepst\|_{\Ltepsh}^2 &\le C \left( \|\delta \uepst \|^2_{L^2(\oeht)} + \epsilon \sum_{j=1}^2 \|\delta \uepsj \|^2_{L^2(\geh)}\right)
\\
&\le  \sum_{j=1}^2 \left( C(\theta)\Vert \delta \uepsj \Vert_{L^2(\oehj)}^2 + \epsilon \theta \Vert \nabla \uepsj \Vert^2_{L^2(\oehj)} \right),
\end{align*} 
for arbitrary $\theta >0$, where we used the trace-inequality $\eqref{TraceInequality}$. Choosing $\theta$ small enough the gradient term for $j=2$ can be absorbed from the left-hand side. Integrating with respect to time, using the \textit{a priori} estimates from Proposition \ref{PropositionAprioriEstimates} for the gradients of $\uepso$, as well as the Gronwall-inequality, we obtain the desired result.
\end{proof}

\section{Two-scale compactness results}
\label{SectionTwoScaleConvergenceUnfolding}

In this section we prove general strong two-scale compactness results for functions in the space $L^2((0,T),\Hjeps) \cap H^1((0,T),\Hjeps')$ for $j\in \{1,2\}$ based on suitable \textit{a priori} estimates. These estimates are fulfilled by the microscopic solution $\ueps = (\uepso,\uepst)$ which fulfills Proposition \ref{PropositionAprioriEstimates} and Proposition \ref{PropositionEstimateShifts}, but are not restricted to them.
The connected and disconnected case are completely different and are therefore treated separately. These strong compactness results are enough to pass to the limit in the nonlinear terms in the microscopic equation $\eqref{VariationalEquationMicroscopicProblem}$, in fact we have:
\begin{lemma}\label{LemmaStrongTSSurface} 
Let $p \in (1,\infty)$.
\begin{enumerate}
[label = (\roman*)]
\item\label{LemmaStrongTSSurfaceBulk} For $j \in \{1,2\}$ let $(\uepsj) \subset L^p((0,T)\times \oej)$ be a  sequence converging strongly in the two-scale sense to $u_0^j \in L^p((0,T)\times \Omega \times Y_j)$. Further $f:[0,T]\times Y_j \times \R \rightarrow \R$ is continuous, $Y$-periodic with respect to the second  variable, and fulfills the growth condition
\begin{align*}
\vert f(t,y,z)\vert \le C(1  + \vert z \vert ) \quad \mbox{ for all } (t,y,z) \in [0,T]\times Y_j \times \R.
\end{align*}
Then it holds up to a subsequence
\begin{align*}
f\left(\cdot_t,\frac{\cdot_x}{\epsilon} ,\uepsj \right) &\rightarrow f\left(\cdot_t,\cdot_y , u_0^j\right) &\mbox{ in the two-scale sense in } L^p.
\end{align*}
\item\label{LemmaStrongTSSurfaceSurface} Let $(\ueps) \subset L^p((0,T)\times \ge)$ be a  sequence converging strongly in the two-scale sense on $\ge$ to $u_0 \in L^p((0,T)\times \Omega \times \Gamma)$. Further $h:[0,T]\times \Gamma \times \R \rightarrow \R$ is continuous, $Y$-periodic with respect to the second  variable, and fulfills the growth condition
\begin{align*}
\vert h(t,y,z)\vert \le C(1  + \vert z \vert ) \quad \mbox{ for all } (t,y,z) \in [0,T]\times \Gamma \times \R.
\end{align*}
Then it holds up to a subsequence
\begin{align*}
h\left(\cdot_t,\frac{\cdot_x}{\epsilon} ,\ueps \right) &\rightarrow h(\cdot_t,\cdot_y , u_0) &\mbox{ in the two-scale sense on } \ge \mbox{ in } L^p.
\end{align*}
\end{enumerate}
We emphasize that for functions $f$ and $h$ uniformly Lipschitz-continuous with respect to the last variable, the growth conditions are fulfilled. For such Lipschitz-continuous functions we also easily obtain the strong two-scale convergence of the whole sequence. 
\end{lemma}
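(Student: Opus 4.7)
The plan is to use the unfolding operator $\te$ to translate both hypotheses and conclusion into ordinary $L^p$-convergence statements on the fixed domains $(0,T)\times \Omega \times Y_j$ (respectively $(0,T)\times \Omega \times \Gamma$), where standard measure-theoretic tools apply directly. Recall from the Appendix that strong two-scale convergence is equivalent to strong $L^p$-convergence of the unfolded sequence, so the hypothesis in \ref{LemmaStrongTSSurfaceBulk} gives $\te(\uepsj) \to u_0^j$ in $L^p((0,T)\times \Omega \times Y_j)$, and analogously for the surface unfolding in \ref{LemmaStrongTSSurfaceSurface}.

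First I would extract a subsequence (not relabeled) along which $\te(\uepsj)(t,x,y) \to u_0^j(t,x,y)$ pointwise almost everywhere. The $Y$-periodicity of $f$ in its second argument together with the construction of $\te$ yields the commutation identity
\begin{align*}
\te\!\left( f\!\left(\cdot_t, \tfrac{\cdot_x}{\epsilon}, \uepsj\right)\right)(t,x,y) = f\!\left(t, y, \te(\uepsj)(t,x,y)\right)
\end{align*}
almost everywhere in $(0,T)\times \Omega_{\epsilon,h}\times Y_j$ for $h$ small (the residual boundary region has measure tending to zero, so causes no trouble in the limit). Joint continuity of $f$ then promotes the pointwise convergence of $\te(\uepsj)$ to pointwise convergence of $f(t,y,\te(\uepsj)) \to f(t,y,u_0^j)$ a.e.

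To upgrade pointwise convergence to convergence in $L^p$, I would apply the Vitali convergence theorem. The strong $L^p$-convergence of $\te(\uepsj)$ yields equi-integrability of $|\te(\uepsj)|^p$, and the linear growth bound $|f(t,y,z)|\le C(1+|z|)$ transfers this to equi-integrability of $|f(t,y,\te(\uepsj))|^p$. Combined with the a.e. convergence this gives strong $L^p$-convergence of $f(t,y,\te(\uepsj))$ to $f(t,y,u_0^j)$, which, unfolded back, is the strong two-scale convergence claimed. The argument for \ref{LemmaStrongTSSurfaceSurface} is structurally identical, using the surface unfolding operator and working in $L^p((0,T)\times \Omega\times \Gamma)$.

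The only delicate point is the commutation/measure-zero residual near the outer boundary where $\te$ is customarily set to zero; this is standard and contributes nothing in the limit. For the final remark, if $f$ (respectively $h$) is uniformly Lipschitz in the last argument, the estimate $\|\te(f(\cdot_t,\cdot_x/\epsilon,\uepsj)) - \te(f(\cdot_t,\cdot_x/\epsilon,v_\epsilon^j))\|_{L^p} \le L\,\|\te(\uepsj)-\te(v_\epsilon^j)\|_{L^p}$ shows that the nonlinear composition is Lipschitz with respect to strong two-scale convergence, so the limit is unique and the whole sequence converges without passing to a subsequence.
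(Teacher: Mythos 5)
Your proposal is correct and follows the same skeleton as the paper's proof: unfold, use the equivalence between strong two-scale convergence and strong $L^p$-convergence of the unfolded sequence, extract an a.e.\ convergent subsequence, and exploit the commutation identity $\te\big(h(\cdot_t,\cdot_x/\epsilon,\ueps)\big)=h(\cdot_t,y,\te\ueps)$ together with continuity of the nonlinearity. The difference lies in the final upgrading step. The paper stops at the \emph{weak} conclusion: it observes that the growth condition makes the unfolded composition bounded in $L^p$ and invokes Egorov's theorem (a.e.\ convergence plus $L^p$-boundedness, $p>1$, on a finite-measure set) to conclude weak $L^p$-convergence --- which is exactly the (weak) two-scale convergence asserted in the lemma. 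You instead use the Vitali convergence theorem: strong $L^p$-convergence of $\te\ueps$ gives equi-integrability of $|\te\ueps|^p$, the linear growth bound transfers this to $|h(\cdot,\cdot,\te\ueps)|^p$, and combined with a.e.\ convergence you obtain \emph{strong} $L^p$-convergence of the composition, hence strong two-scale convergence. This is a genuinely stronger conclusion than the lemma claims for merely continuous $h$ with linear growth (the paper only recovers strong convergence, and convergence of the full sequence, in the Lipschitz case via the direct estimate you also give at the end). Your handling of the residual set $\Lambda_\epsilon$, where the commutation identity fails because $\te$ is set to zero, is a point the paper passes over silently; your observation that its measure vanishes and $h(t,y,0)$ is bounded correctly disposes of it. Both arguments are valid; yours buys more at essentially no extra cost.
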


\begin{proof}
We only prove \ref{LemmaStrongTSSurfaceSurface}. The other statement follows the same way. Due to Lemma \ref{PropAequivalenzTSKonvergenzUnfolding} the sequence $\te \ueps$ converges in $L^p((0,T)\times \Omega \times \Gamma)$ to $u_0$. Hence, up to a subsequence, $\te \ueps \rightarrow u_0$ almost everywhere in $(0,T)\times \Omega \times \Gamma$. Further we have
\begin{align*}
\te \left( h\left(\cdot_t,\frac{\cdot_x}{\epsilon} ,\ueps \right) \right) = h\left(\cdot_t , y , \te \ueps \right) \rightarrow h(\cdot_t,\cdot_y,u_0) \quad \mbox{ a.e. in } (0,T)\times \Omega \times \Gamma.
\end{align*}
The growth condition on $h$ implies  $\te \left( h\left(\cdot_t,\frac{\cdot_x}{\epsilon} ,\ueps \right) \right)$ bounded in $ L^p((0,T)\times \Omega \times \Gamma)$. Egorov's theorem (see also \cite[Theorem 13.44]{HewittStromberg}) implies
\begin{align*}
\te \left( h\left(\cdot_t,\frac{\cdot_x}{\epsilon} ,\ueps \right) \right) \rightharpoonup h(\cdot_t ,\cdot_y , u_0) \quad \mbox{ weakly in } L^p((0,T)\times \Omega \times \Gamma).
\end{align*}
\end{proof}

As a direct consequence we obtain by density:

\begin{lemma}\label{LemmaStrongTwoScaleLpSenseNonlinear}
Let $f$ be as in Lemma \ref{LemmaStrongTSSurface}. Further, let $\{\veps\}$ be a bounded sequence in $L^2((0,T)\times \oej)$ which converges strongly in the $L^p$-two-scale sense to $v_0 \in L^2((0,T)\times \Omega \times Y)$ for $p \in [1,2)$. Then it holds that
\begin{align*}
f\left(\cdot_t,\frac{\cdot_x}{\epsilon} ,\veps(\cdot_x)\right) &\rightarrow f(\cdot_t,\cdot_y , v_0 (\cdot_x)) &\mbox{ in the two-scale sense in }L^2.
\end{align*}
A similar result holds on the oscillating surface $\ge$.
\end{lemma}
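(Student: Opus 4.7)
The plan is a direct bootstrap from Lemma~\ref{LemmaStrongTSSurface}: the $L^2$-bound on $\veps$ together with the linear growth of $f$ lifts the $L^p$-two-scale convergence delivered by that lemma to a genuine $L^2$-two-scale convergence.

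First I would observe that, by the growth assumption,
\[
\bigl\|f\bigl(\cdot_t,\tfrac{\cdot_x}{\epsilon},\veps\bigr)\bigr\|_{L^2((0,T)\times \oej)} \le C\bigl(1 + \|\veps\|_{L^2((0,T)\times \oej)}\bigr),
\]
so the composed sequence is bounded in $L^2((0,T)\times \oej)$ and hence admits, along a subsequence, an $L^2$-two-scale limit $g_0 \in L^2((0,T)\times \Omega \times Y_j)$ by standard two-scale compactness. On the other hand, the hypothesis supplies strong $L^p$-two-scale convergence $\veps \to v_0$ for an exponent $p \in (1,2)$, so Lemma~\ref{LemmaStrongTSSurface}\ref{LemmaStrongTSSurfaceBulk} applied at this $p$ yields
\[
f\bigl(\cdot_t,\tfrac{\cdot_x}{\epsilon},\veps\bigr) \to f(\cdot_t,\cdot_y,v_0) \qquad \text{two-scale in } L^p.
\]

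To reconcile the two limits, I would test both convergences against an arbitrary smooth admissible function $\phi \in C^\infty([0,T]\times\overline{\Omega}; C^\infty_{\per}(Y))$; such $\phi$ is bounded on the bounded domain $(0,T)\times \Omega \times Y_j$, hence lies simultaneously in $L^{p'}$ and $L^2$ and is an admissible test function for both two-scale convergences. Both limits of
\[
\int_0^T\!\!\int_{\oej} f\bigl(t,\tfrac{x}{\epsilon},\veps\bigr)\,\phi\bigl(t,x,\tfrac{x}{\epsilon}\bigr)\,dx\,dt
\]
must therefore agree, which forces $g_0 = f(\cdot_t,\cdot_y,v_0)$ by density of smooth admissible test functions. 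The limit being uniquely determined, the convergence passes from a subsequence to the whole sequence.

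The surface statement is obtained by the identical argument with Lemma~\ref{LemmaStrongTSSurface}\ref{LemmaStrongTSSurfaceSurface} in place of \ref{LemmaStrongTSSurfaceBulk}, using the weighted surface inner product from Section~\ref{SectionFunctionsSpaces} and the natural $L^2$-bound $\sqrt{\epsilon}\|\veps\|_{L^2(\ge)} \le C$ inherited from the notion of an $L^2$-two-scale bounded sequence on $\ge$. I do not expect a genuine obstacle: the whole content of the argument is that weak two-scale limits in $L^p$ and in $L^2$ must coincide when tested against smooth admissible functions, so the role of "density" is confined to the test-function class.
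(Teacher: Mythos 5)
Your argument is correct and is essentially the paper's own (the paper dispatches this lemma in one line as ``a direct consequence by density'', and your compactness-plus-identification-of-limits reasoning is the standard way to make that precise: uniform $L^2$-boundedness of the composed sequence plus convergence of the pairings against the smooth test class, which is dense in $L^2((0,T)\times\Omega,C^0_{\per}(Y))$, upgrades the $L^p$-two-scale limit to an $L^2$-two-scale limit, and uniqueness of the limit gives convergence of the full sequence). The only cosmetic caveat is that you work with $p\in(1,2)$ while the statement allows $p=1$, but since the sequence is bounded in $L^2$ this is harmless (strong $L^1$-two-scale convergence upgrades to strong $L^q$-two-scale convergence for any $q\in(1,2)$ by interpolation), and the surface case indeed needs the scaling $\sqrt{\epsilon}\|\veps\|_{L^2(\ge)}\le C$ exactly as you note.
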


\subsection{The connected domain $\oeo$}
\label{SectionConnectedDomain}
Here we give a strong compactness result for a sequence in the connected domain $\oeo$ under suitable \textit{a priori} estimates. The case of a connected perforated domain can be treated more easily than a disconnected domain, because we can extend a bounded sequence in $H^1(\oe)$ to a bounded sequence in $H^1(\Omega)$, due to \cite{Acerbi1992,CioranescuSJPaulin}. Hence, we can work in fixed Bochner spaces (not depending on $\epsilon$) and use standard methods from functional analysis.
For this we need control for the time-variable, which can be obtained from the uniform bound of the time-derivative $\partial_t \uepso$. However, since $\partial_t \uepso$ is pointwise only an element in the space $\Hoeps'$ it is not clear if the time-derivative of the extension of $\uepso$ exists and if it is bounded uniformly with respect to $\epsilon$ (Unfortunately, this circumstance is often overseen in the existing literature).
The following Lemma gives us an estimate for the difference of the shifts with respect to time for functions with generalized time-derivative. It is just an easy generalization of \cite[Lemma 9]{Gahn}.

\begin{lemma}\label{LemmaNikolskiiEmbedding}
Let $V$ and $H$ be Hilbert spaces   and we assume that $(V,H,V^{\prime})$ is a Gelfand-triple. Let $v \in L^2((0,T),V) \cap H^1((0,T),V^{\prime})$. Then, for every $\phi \in V$ and almost every $t \in (0,T)$, $s\in(-T,T)$, such that $t + s \in (0,T)$, we have
\begin{align*}
\big|(v(t+s) - v(t), \phi)_H\big| \le \sqrt{|s|} \|\phi\|_V \|\partial_t v \|_{L^2((t,t+s),V^{\prime})}.
\end{align*}
Especially, it holds that
\begin{align*}
\big\|v(t+s) - v(t) \big\|^2_H \le \sqrt{|s|} \big\|v(t+s) - v(t)\big\|_V \|\partial_t v\|_{L^2((t,t+s),V^{\prime})}.
\end{align*}
\end{lemma}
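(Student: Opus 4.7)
The plan is to exploit the fundamental theorem of calculus for Bochner spaces together with the compatibility of the duality pairing $\langle \cdot, \cdot \rangle_{V',V}$ with the inner product on $H$, which is the content of the Gelfand triple structure.

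First I would argue that since $v \in H^1((0,T),V')$, the difference $v(t+s) - v(t)$ is well defined in $V'$ for almost every admissible pair $(t,s)$, and can be written as the Bochner integral
\begin{align*}
v(t+s) - v(t) = \int_t^{t+s} \partial_\tau v(\tau)\, d\tau \qquad \text{in } V'.
\end{align*}
Next, because $v \in L^2((0,T),V) \subset L^2((0,T),H)$, for almost every $t$ the values $v(t+s)$ and $v(t)$ lie in $H$, so the pairing of $v(t+s) - v(t)$ with $\phi \in V$ is simultaneously the $H$-inner product and the $V',V$-duality pairing. This yields
\begin{align*}
(v(t+s) - v(t), \phi)_H = \int_t^{t+s} \langle \partial_\tau v(\tau), \phi \rangle_{V',V}\, d\tau.
\end{align*}

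The first inequality then follows immediately from Hölder's inequality applied to the scalar integral on the right: bounding the integrand by $\|\partial_\tau v(\tau)\|_{V'} \|\phi\|_V$ and then applying Cauchy--Schwarz in time on the interval of length $|s|$ gives the stated bound $\sqrt{|s|}\, \|\phi\|_V\, \|\partial_t v\|_{L^2((t,t+s),V')}$.

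For the second inequality, for almost every $t$ we have $v(t+s) - v(t) \in V$ (again using $v \in L^2((0,T),V)$), so we may insert the admissible test element $\phi := v(t+s) - v(t)$ into the first inequality. The left-hand side becomes $\|v(t+s) - v(t)\|_H^2$, and the norm $\|\phi\|_V$ becomes $\|v(t+s) - v(t)\|_V$, yielding the claim. I expect no real obstacle here; the only subtlety worth writing out carefully is the identification of the $H$-inner product with the $V',V$-pairing, which is precisely what justifies passing the time integral through the pairing, and the measurability/a.e. considerations needed to ensure the evaluations $v(t)$, $v(t+s)$ make sense pointwise in the correct spaces.
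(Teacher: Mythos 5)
Your proposal is correct and is essentially the argument the paper relies on: the paper only cites the analogous result (\cite[Lemma 9]{Gahn}) for the concrete Gelfand triple, and the underlying proof there is exactly your chain of the fundamental theorem of calculus in $V'$, the identification $(w,\phi)_H = \langle w,\phi\rangle_{V',V}$ for $w\in H$, and H\"older/Cauchy--Schwarz in time, followed by testing with $\phi = v(t+s)-v(t)\in V$ for the second inequality. No gaps; the only point to state explicitly is that for $s<0$ the interval $(t,t+s)$ is to be read as the interval between $t+s$ and $t$.
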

\begin{proof}
The proof follows the same lines as the proof of \cite[Lemma 9]{Gahn}, if we replace  the Gelfand-triple $(H^1(\Omega_j^{\epsilon}),L^2(\Omega_j^{\epsilon}),H^1(\Omega_j^{\epsilon})')$ by the Gelfand-triple $(V,H,V')$.
\end{proof}

In the following, for $\veps \in H^1(\oeo)$ we denote by $\tveps \in H^1(\Omega)$ the extension from \cite{Acerbi1992,CioranescuSJPaulin} with 
\begin{align*}
\Vert \tveps \Vert_{L^2(\Omega)} \le C \Vert \veps\Vert_{L^2(\oeo)}, \quad \Vert \nabla \tveps \Vert_{L^2(\Omega)} \le C \Vert \nabla \veps \Vert_{L^2(\oeo)},
\end{align*}
with a constant $C>0$ independent of $\epsilon$. 

\begin{proposition}\label{StrongConvergenceConnectedDomain}
Let $(\veps)\subset L^2((0,T),\Hoeps)\cap H^1((0,T),\Hoeps')$ be a sequence with
\begin{align}\label{PropStarkeKonvergenzZshgGebiet}
\Vert \partial_t \veps \Vert_{L^2((0,T),\Hoeps')} + \Vert \veps \Vert_{L^2((0,T),\Hoeps)} \le C.
\end{align}
There exists $v_0 \in L^2((0,T),H^1(\Omega))$ such that for all $\beta \in \left(\frac12, 1\right)$  up to a subsequence it holds that
\begin{align*}
\tveps  \rightarrow v_0 \quad \mbox{ in } L^2((0,T),H^{\beta}(\Omega)).\end{align*}
Further, it holds that (up to a subsequence)
\begin{align*}
\te \veps \rightarrow v_0 \quad \mbox{ in } L^2((0,T)\times \Omega , \Ho).
\end{align*}
\end{proposition}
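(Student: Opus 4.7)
The plan is to carry out an Aubin-Lions/Simon compactness argument on the extensions $\tveps$, followed by an interpolation step to reach $H^{\beta}$. Using the Acerbi-Cioranescu-Saint Jean Paulin operator, which is linear and uniformly bounded in $\epsilon$, I extend each $\veps(t,\cdot)$ to $\tveps(t,\cdot)\in H^1(\Omega)$, so that $\tveps$ is bounded in $L^2((0,T),H^1(\Omega))$. The main obstacle is that $\partial_t \veps$ is only controlled in the $\epsilon$-dependent dual $\Hoeps'$, which blocks any direct bound on $\partial_t \tveps$ in a fixed Bochner space. To sidestep this I would apply Lemma \ref{LemmaNikolskiiEmbedding} to the Gelfand triple $(\Hoeps,\Loeps,\Hoeps')$, which yields
\begin{align*}
\|\veps(t+s)-\veps(t)\|_{\Loeps}^2 \le \sqrt{|s|}\,\|\veps(t+s)-\veps(t)\|_{\Hoeps}\,\|\partial_t\veps\|_{L^2((t,t+s),\Hoeps')}.
\end{align*}
Integrating in $t$ and using Cauchy-Schwarz together with the hypothesis $\eqref{PropStarkeKonvergenzZshgGebiet}$ gives $\int_0^{T-s}\|\veps(t+s)-\veps(t)\|_{L^2(\oeo)}^2\,dt \le C\sqrt{|s|}$; by linearity and $L^2$-boundedness of the extension operator, the same uniform time-equicontinuity transfers to $\tveps$ in $L^2(\Omega)$.

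By Simon's compactness theorem, applied with the compact embedding $H^1(\Omega)\hookrightarrow L^2(\Omega)$ and the time-equicontinuity just established, a subsequence of $\tveps$ converges strongly in $L^2((0,T)\times\Omega)$ to some $v_0$; the uniform bound in $L^2((0,T),H^1(\Omega))$ together with weak closedness identifies $v_0\in L^2((0,T),H^1(\Omega))$. To upgrade the convergence to $H^{\beta}$ for $\beta\in(\tfrac12,1)$ I would invoke the interpolation inequality $\|w\|_{H^{\beta}(\Omega)}\le C\,\|w\|_{L^2(\Omega)}^{1-\beta}\,\|w\|_{H^1(\Omega)}^{\beta}$, square and integrate in time, then apply H\"older with exponents $1/(1-\beta)$ and $1/\beta$ to conclude $\tveps\to v_0$ in $L^2((0,T),H^{\beta}(\Omega))$.

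For the two-scale statement, standard properties of the unfolding operator convert the strong $L^2$-convergence of $\tveps$ into strong convergence $\te\veps\to v_0$ in $L^2((0,T)\times\Omega\times Y_1)$. The $H^1(Y_1)$-component of the $\Ho$-norm is handled via the identity $\nabla_y\te\veps=\epsilon\,\te(\nabla\veps)$: since $\te(\nabla\veps)$ is bounded in $L^2((0,T)\times\Omega\times Y_1)$, the prefactor $\epsilon$ forces $\nabla_y\te\veps\to 0=\nabla_y v_0$ strongly. The remaining surface components of the $\Ho$-norm (the trace on $\Gamma$ and its tangential gradient) are treated analogously using the surface unfolding together with the relation $\nabla_{\Gamma}(\te\veps)=\epsilon\,\te(\ngeps\veps)$, combined with the $\epsilon$-weighted bound $\sqrt{\epsilon}\,\|\ngeps\veps\|_{L^2((0,T)\times\ge)}\le C$ implicit in $\eqref{PropStarkeKonvergenzZshgGebiet}$; strong convergence of the traces themselves is inherited from the $L^2((0,T),H^{\beta}(\Omega))$-convergence of $\tveps$ for $\beta>\tfrac12$ via the trace theorem.
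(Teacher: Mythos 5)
Your proof is correct and follows essentially the same route as the paper: extension to the fixed domain, the Nikolskii-type time-shift estimate from Lemma \ref{LemmaNikolskiiEmbedding} to compensate for the lack of a uniform bound on $\partial_t \tveps$ in a fixed dual space, Simon's compactness theorem, and the unfolding identities $\nabla_y \te \veps = \epsilon\, \te \nabla \veps$ and $\ngy \te \veps = \epsilon\, \te \ngeps \veps$ for the cell-norm components. The only cosmetic differences are that the paper applies Simon directly with the compact embedding $H^1(\Omega)\hookrightarrow H^{\beta}(\Omega)$ instead of first obtaining $L^2((0,T)\times\Omega)$-convergence and interpolating up to $H^{\beta}$, and it controls the trace component on $\Gamma$ via the $\epsilon$-scaled trace inequality $\eqref{TraceInequality}$ rather than an $H^{\beta}$-trace argument.
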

\begin{proof}
Since $\tveps$ is bounded in $L^2((0,T),H^1(\Omega))$ there exists $v_0 \in L^2((0,T),H^1(\Omega))$, such that up to a subsequence $\veps $ converges weakly to $v_0$ in $L^2((0,T),H^1(\Omega))$.  Lemma \ref{LemmaNikolskiiEmbedding} and inequality $\eqref{PropStarkeKonvergenzZshgGebiet}$ imply for  $0<h\ll 0$
\begin{align}
\begin{aligned}\label{AuxiliaryEstimateShiftTime}
\int_0^{T-h}\|\veps (t + h ) - \veps\|^2_{\Loeps} dt
&\le  C \sqrt{h}  \|\partial_t \veps\|_{L^2((0,T),\Hoeps')} \int_0^{T-h} \|\veps (t+h) - \veps\|_{\Hoeps} dt 
\\
&\le C\sqrt{h}.
\end{aligned}
\end{align}
Now, from the properties of the extension $\tveps$ we obtain
\begin{align*}
\int_0^{T-h} \Vert \tveps(t+h) - \tveps\Vert^2_{L^1(\Omega)} dt \le C \int_0^{T-h}\Vert \veps(t+h) - \veps\Vert^2_{L^2(\oeo)} dt \le C\sqrt{h}.
\end{align*}
Since $H^1(\Omega) \hookrightarrow H^{\beta}(\Omega)$ is compact for $\beta \in \left(\frac12, 1\right)$ we can apply \cite[Theorem 1]{Simon} to $(\tveps)$ as a sequence in $L^2((0,T),H^{\beta}(\Omega))$ and obtain the strong convergence of $\tveps$ to $v_0$ in $L^2((0,T),H^{\beta}(\Omega))$.

Now we prove the convergence of $\te \veps$. It holds that
\begin{align*}
\Vert \te \veps - v_0 \Vert_{L^2((0,T)\times \Omega , \Ho) }^2 = \Vert \te \veps - v_0 \Vert_{L^2((0,T)\times \Omega,H^1(Y_1))}^2 + \Vert \te \veps - v_0 \Vert_{L^2((0,T)\times \Omega , H^1(\Gamma))}^2.
\end{align*}
We only prove the convergence to zero for the second term, since the first one can be treated in a similar way. We obtain from the properties of the unfolding operator from Lemma \ref{PropertiesUnfoldingOperator}, the trace inequality, and the inequality $\eqref{PropStarkeKonvergenzZshgGebiet}$
\begin{align*}
\Vert &\te \veps - v_0 \Vert_{L^2((0,T)\times \Omega,H^1(\Gamma))} \le C\Vert \te \veps - v_0 \Vert_{L^2((0,T)\times \Omega \times \Gamma)} + C\Vert \ngy \te \veps \Vert_{L^2((0,T)\times \Omega \times \Gamma)} 
\\
&\le C\left( \Vert \te \veps - v_0 \Vert_{L^2((0,T)\times \Omega \times Y_1)} + \epsilon \Vert  \nabla \veps \Vert_{L^2((0,T)\times \oeo)} + \epsilon^{\frac32} \Vert  \ngeps \veps \Vert_{L^2((0,T)\times \ge)}    \right)
\\
&\le C\left( \Vert \veps - v_0 \Vert_{L^2((0,T)\times \oeo)} + \Vert \te v_0 - v_0 \Vert_{L^2((0,T)\times \Omega \times Y_1)} + \epsilon    \right).
\end{align*}
The first term converges to zero for $\epsilon \to 0$, due to the strong convergence of $\tveps $ to $v_0$, and the second term because of \cite[Prop. 4.4]{CioranescuGrisoDamlamian2018}. This gives the desired result.
\end{proof}

\begin{remark}\mbox{}
\begin{enumerate}
[label = (\roman*)]
\item The extension of a bounded sequence $\veps$ in $L^2((0,T),H^1(\oeo))$ to $\tveps \in L^2((0,T),H^1(\Omega))$ preserving the boundedness is only possible for the connected domain $\oeo$. This  implies in a simple way the strong convergence of  $\tveps$, if there is also a control for the time-derivative. For the disconnected domain this method fails and therefore we use a Kolmogorov-Simon-compactness result for the unfolded sequence, where we  need an additional condition to control the shifts with respect to the macroscopic variable, see Theorem \ref{MainResultsGeneralStrongTSConvergence}.
\item If we replace in the assumptions of Proposition \ref{PropStarkeKonvergenzZshgGebiet} the space $\Hoeps$ by $H^1(\oeo)$,  we obtain the well known result that a bounded sequence in $L^2((0,T),H^1(\oeo))\cap H^1((0,T),H^1(\oeo)')$ has an extension converging strongly in $L^2((0,T)\times \Omega)$, see \cite{MeirmanovZimin}, and also in $L^2((0,T),H^{\beta}(\Omega))$ for $\beta \in \left(\frac12,1\right)$, see \cite{Gahn}. The proof above for the strong convergence of $\tveps$ can be easily adapted to this situation. However, we emphasize that  in our situation we cannot guarantee for the solution $(\uepso,\uepst)$ that $\partial_t \uepso \in L^2((0,T),H^1(\oeo)')$.
\end{enumerate}

\end{remark}

\subsection{The disconnected domain $\oet$}
\label{SectionDisconnectedDomain}

In this section we give a strong two-scale compactness result for the disconnected domain $\oet$ of Kolmogorov-Simon-type, \ie  it is based on \textit{a priori} estimates for the difference of discrete shifts, see condition \ref{StrongTSConvergenceConditionShifts} in Theorem \ref{MainResultsGeneralStrongTSConvergence}. As already mentioned above it is in general not possible to find an extension for a function in $H^1(\oet)$ to the whole domain $\Omega$ which preserves the \textit{a priori} estimates. Hence, the method from Section \ref{SectionConnectedDomain} for the connected domain fails. Therefore, we consider the unfolded sequence in the Bochner space $L^p(\Omega,L^2((0,T),\H_2^{\beta}))$ with $\beta \in \left(\frac12,1\right)$ and $p \in (1,2)$, and apply the Kolmogorov-Simon-compactness result from \cite{GahnNeussRaduKolmogorovCompactness}, which gives an extension of \cite[Theorem 1]{Simon} to higher-dimensional domains of definition. Here, a crucial point is the estimate for the shifts. An important reason to work here with general Bochner spaces, \ie Banach-valued functions spaces, is that we are dealing with manifolds and therefore linear shifts with respect to the space variable are not well defined.

 In the following Lemma we estimate the shifts of the unfolded sequence with respect to the macroscopic variable by the shifts of the function itself, see again Section \ref{SectionAprioriEstimates} for the notations.

\begin{lemma}\label{EstimateShiftsUnfoldingAndSequence}
Let $\veps \in L^2((0,T)\times \oej)$ for $j \in \{1,2\}$ and $w_{\epsilon} \in L^2((0,T)\times \ge)$. Then, for $0 < h \ll 1$ , $|z|< h$, and $\epsilon $ small enough it holds that
\begin{align*}
\big\|\te \veps (t,x + z , y) - \te \veps \big\|_{L^2(0,T) \times \Omega_{2h}\times Y_j)}^2 &\le \sum_{k \in \{0,1\}^n} \|\delta \veps \|_{L^2((0,T)\times \oehj)}^2,
\\
\big\|\te w_{\epsilon} (t,x + z , y) - \te w_{\epsilon} \big\|_{L^2(0,T) \times \Omega_{2h}\times \Gamma)}^2 &\le \epsilon \sum_{k \in \{0,1\}^n} \|\delta w_{\epsilon} \|_{L^2((0,T)\times \geh)}^2,
\end{align*}
with $l = l(\epsilon,z,k) = k + \left[\frac{z}{\epsilon}\right]$.
\end{lemma}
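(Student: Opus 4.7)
The plan is to reduce the difference $\te \veps(t,x+z,y) - \te \veps(t,x,y)$ to a discrete shift $\delta_l \veps$ of the original function, integrate cell by cell, and split the integral according to a ``carry'' vector. Writing $x = \epsilon m + \epsilon \xi$ with $m = [x/\epsilon]$ and $\xi = \{x/\epsilon\} \in Y$, and $z = \epsilon[z/\epsilon] + \epsilon\{z/\epsilon\}$, one checks that
\begin{align*}
x+z = \epsilon\bigl(m + [z/\epsilon] + c(\xi)\bigr) + \epsilon\bigl(\xi + \{z/\epsilon\} - c(\xi)\bigr),
\end{align*}
where the \emph{carry} $c(\xi) \in \{0,1\}^n$ has $c(\xi)_i = 1$ iff $\xi_i + \{z/\epsilon\}_i \geq 1$, and the leftover fractional part again lies in $Y$. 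With $l = l(\epsilon,z,k) := k + [z/\epsilon]$, this identity yields
\begin{align*}
\te \veps(t,x+z,y) - \te \veps(t,x,y) = \delta_l \veps\bigl(t,\epsilon m + \epsilon y\bigr) \quad \text{for } k = c(\xi).
\end{align*}

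Next I would partition $Y$ into the $2^n$ disjoint sets $Y^{(k)} := \{\xi \in Y : c(\xi) = k\}$ and localize the $x$-integral to cells $\epsilon(m+Y) \subset \Omega_{2h}$. For $\epsilon$ sufficiently small relative to $h$, each such cell lies in $\oehj$ (using $\Omega_{2h} \subset \Omega_h$ with the $\epsilon\sqrt{n}$-buffer) and its shift by $\epsilon l$ still lies in $\oej$, so $\delta_l \veps$ makes sense. Applying the affine change of variable $x = \epsilon m + \epsilon \xi$ on the outer integral (Jacobian $\epsilon^n$) and $x' = \epsilon m + \epsilon y$ on the inner one (Jacobian $\epsilon^n$), and noting that the integrand is constant in $\xi$ within each $Y^{(k)}$, one arrives at
\begin{align*}
\int_{\Omega_{2h}} \!\! \int_{Y_j} \bigl|\te \veps(t,x+z,y) - \te \veps(t,x,y)\bigr|^2 dy \, dx \le \sum_{k \in \{0,1\}^n} |Y^{(k)}| \, \|\delta \veps\|_{L^2(\oehj)}^2,
\end{align*}
and integration in $t$ together with $|Y^{(k)}| \le 1$ gives the first inequality.

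The surface bound follows from the identical decomposition, except that the inner change of variable now runs over the $(n-1)$-dimensional manifold $\epsilon(m+\Gamma)$ with Jacobian $\epsilon^{n-1}$; the product $\epsilon^{n} \cdot \epsilon^{-(n-1)} = \epsilon$ then appears as a prefactor in front of $\|\delta w_\epsilon\|_{L^2(\geh)}^2$, which is precisely the extra $\epsilon$ in the stated estimate.

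The main obstacle is bookkeeping rather than a genuine analytic step: one has to (i) correctly isolate the carry vector $c(\xi)$ so that on each $Y^{(k)}$ the shift index $l$ is constant and equals $k + [z/\epsilon]$, and (ii) verify that, for $\epsilon$ small relative to $h$, every contributing cell together with its $\epsilon l$-shift is contained in the appropriate perforated domain. This is the reason the buffer in the lemma is $\Omega_{2h}$ rather than $\Omega_h$: it absorbs the $O(\epsilon\sqrt{n})$ error coming from truncating $z/\epsilon$ to its integer part and from the carry. Beyond this combinatorial setup, only Fubini and elementary change-of-variable formulas are needed.
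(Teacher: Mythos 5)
Your proof is correct and is precisely the standard carry-vector/cell-decomposition argument that the paper delegates to the literature: its own ``proof'' is a one-line citation to the thin-layer case in Neuss-Radu and J\"ager, which rests on exactly the identity $[(x+z)/\epsilon]=[x/\epsilon]+[z/\epsilon]+c(\{x/\epsilon\})$ with $c(\xi)\in\{0,1\}^n$, the partition of $Y$ into the sets where $c$ is constant, and the two changes of variables with Jacobians $\epsilon^n$ and $\epsilon^{n-1}$ producing the extra factor $\epsilon$ on the surface. Your bookkeeping of the $\Omega_{2h}$ buffer versus $\oehj$ and $\geh$ is also the right way to justify the domain of the shifted functions, so you have in effect supplied the details the paper omits.
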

\begin{proof}
The proof for a thin layer can be found in \cite[p. 709]{NeussJaeger_EffectiveTransmission} and can be easily extended to our setting. See also \cite{Gahn} for more details.
\end{proof}

\begin{theorem}
\label{MainResultsGeneralStrongTSConvergence}
Let $\veps \in L^2((0,T),\Hteps)\cap H^1((0,T),\Hteps')$  with:
\begin{enumerate}
[label = (\roman*)]
\item\label{StrongTSConvergenceConditionAprioriEstimate} It holds the estimate
\begin{align*}
\|\veps \|_{L^2((0,T),\Hteps)} +  \|\partial_t \veps \|_{L^2((0,T),\Hteps')} \le C. 
\end{align*}
\item\label{StrongTSConvergenceConditionShifts} For $0 < h \ll 1 $ and $l\in \Z^n$ with $|l\epsilon|< h$ it holds that
\begin{align*}
\|\delta \veps \|_{L^2((0,T),L^2(\oeht))} 
\overset{\epsilon l \to 0}{\longrightarrow} 0.
\end{align*}
\end{enumerate}
Then, there exists $v_0 \in L^2((0,T),L^2(\Omega))$, such that for $\beta \in \left(\frac12,1\right)$ and $p\in [1,2)$ it holds up to a subsequence that
\begin{align*}
\te \veps \rightarrow v_0 \quad \mbox{ in } L^p(\Omega,L^2((0,T),\H_2^{\beta})).
\end{align*}
Especially, $\veps$ and $\veps|_{\ge}$ converge strongly in the two-scale sense  to $v_0$ (with respect to $L^p$).
\end{theorem}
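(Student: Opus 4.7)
My plan is to transport the problem to the fixed domain $\Omega \times Y_2$ (and $\Omega \times \Gamma$) via the unfolding operator $\te$, and then apply the Banach-valued Kolmogorov--Simon compactness result from \cite{GahnNeussRaduKolmogorovCompactness} to the sequence $\te \veps$ viewed as an element of $L^p(\Omega, L^2((0,T), \H_2^\beta))$. The reason for working with $\H_2^\beta$ (and not $\H_2$) is to exploit the compact embedding $\H_2 \hookrightarrow \H_2^\beta$ for $\beta \in \left(\frac12, 1\right)$, which is what will let the time-direction be compactified by a Simon-type argument applied fiberwise in $x$.

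First I would extract from hypothesis \ref{StrongTSConvergenceConditionAprioriEstimate} and the standard properties of the unfolding operator (Lemma \ref{PropertiesUnfoldingOperator}) the uniform bound $\|\te \veps\|_{L^2((0,T) \times \Omega, \H_2)} \le C$, noting that the $\epsilon$-weights built into $\|\cdot\|_{\Hteps}$ are exactly those that make the bulk- and surface-contributions of $\te \veps$ balanced on the fixed cell $Y_2 \cup \Gamma$. Then, for almost every $x \in \Omega$, I want to prove that the family $\{\te \veps(x,\cdot,\cdot)\}_\epsilon$ is relatively compact in $L^2((0,T), \H_2^\beta)$. For the time-direction, Lemma \ref{LemmaNikolskiiEmbedding} applied to $\veps$ in the Gelfand triple $(\Hteps, \Lteps, \Hteps')$ yields $\int_0^{T-\tau}\|\veps(t+\tau) - \veps(t)\|_{\Lteps}^2\,dt \le C\sqrt{\tau}$, and unfolding converts this into an equicontinuity estimate for $\te \veps$ in $L^2((0,T), \Lj_2)$ that is uniform in $\epsilon$ and integrable in $x$. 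Combined with the bound in $L^2((0,T), \H_2)$ and the compact embedding $\H_2 \hookrightarrow\hookrightarrow \H_2^\beta$, Simon's theorem \cite{Simon} gives the required fiberwise compactness, together with a uniform (in $\epsilon$) modulus of time-continuity in $\H_2^\beta$.

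For the macroscopic direction, I would use Lemma \ref{EstimateShiftsUnfoldingAndSequence}: the spatial translation of $\te \veps$ by $z$ in the $x$-variable is controlled by the discrete shifts $\delta_l \veps$ with $l = k + [z/\epsilon]$, and by hypothesis \ref{StrongTSConvergenceConditionShifts} these go to $0$ as $\epsilon l \to 0$, i.e.\ as $|z| \to 0$ uniformly in $\epsilon$. This gives equicontinuity of $x \mapsto \te \veps(x,\cdot,\cdot)$ in the $L^p(\Omega; L^2((0,T), \Lj_2))$-sense, which together with the fiberwise compactness in the stronger space $L^2((0,T), \H_2^\beta)$ provides the two hypotheses of the Banach-valued Kolmogorov--Simon theorem. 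This produces a subsequence converging strongly in $L^p(\Omega, L^2((0,T), \H_2^\beta))$ to some limit, which by Lemma \ref{PropAequivalenzTSKonvergenzUnfolding} implies strong two-scale convergence of $\veps$ and of $\veps|_{\ge}$.

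Finally, to see that the limit $v_0$ does not depend on $y$, I would use the relation $\nabla_y \te \veps = \epsilon\, \te(\nabla \veps)$: since $\te(\nabla \veps)$ is bounded in $L^2$ by hypothesis \ref{StrongTSConvergenceConditionAprioriEstimate}, $\nabla_y \te \veps \to 0$ strongly, hence the limit is constant in $y$, giving $v_0 \in L^2((0,T) \times \Omega)$. The main obstacle I anticipate is verifying the hypotheses of the Banach-valued Kolmogorov--Simon theorem cleanly, namely that the time-modulus estimate obtained via Lemma \ref{LemmaNikolskiiEmbedding} is uniform enough in $x$ to combine with the $x$-modulus estimate from Lemma \ref{EstimateShiftsUnfoldingAndSequence}; this is delicate because the control on $\partial_t \veps$ only lives in the weak space $\Hteps'$ so the time-continuity is only gained after passing to the weaker space $\H_2^\beta$ via Simon's interpolation argument, not directly in $\H_2$.
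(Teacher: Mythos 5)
Your overall strategy coincides with the paper's: unfold, view $\te\veps$ as a sequence in $L^p(\Omega,L^2((0,T),\H_2^{\beta}))$, control the macroscopic translations by Lemma \ref{EstimateShiftsUnfoldingAndSequence} together with hypothesis \ref{StrongTSConvergenceConditionShifts}, control the time direction by Lemma \ref{LemmaNikolskiiEmbedding}, and invoke the Banach-valued Kolmogorov--Simon result of \cite[Corollary 2.5]{GahnNeussRaduKolmogorovCompactness}. The genuine gap is in how you propose to verify the ``compact range'' hypothesis of that corollary. You claim relative compactness of the fibers $\{\te\veps(x,\cdot,\cdot)\}_{\epsilon}$ in $L^2((0,T),\H_2^{\beta})$ for almost every fixed $x$. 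This is not obtainable from hypothesis \ref{StrongTSConvergenceConditionAprioriEstimate}: that estimate only controls $\|\te\veps\|_{L^2((0,T)\times\Omega,\H_2)}$, i.e.\ the $L^2_x$-average of the fiber norms, and gives no uniform-in-$\epsilon$ bound on $\|\te\veps(x,\cdot,\cdot)\|_{L^2((0,T),\H_2)}$ at any fixed $x$ (as $\epsilon$ shrinks the cell containing $x$ changes, and the local energy density may blow up along subsequences). The condition actually required by \cite[Corollary 2.5]{GahnNeussRaduKolmogorovCompactness}, and the one the paper verifies, is relative compactness of the \emph{averages} $V_{\epsilon}:=\int_A\te\veps\,dx$ in $L^2((0,T),\H_2^{\beta})$ for every measurable $A\subset\Omega$; averaging in $x$ is precisely what converts the $L^2_x$ bound into a genuine uniform bound $\|V_{\epsilon}\|_{L^2((0,T),\H_2)}\le C$ via Cauchy--Schwarz, after which your Nikolskii-type time-shift estimate, Simon's theorem and the compact embedding $\H_2\hookrightarrow\H_2^{\beta}$ apply exactly as you describe --- but to $V_{\epsilon}$, not fiberwise.

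Two smaller omissions. First, you do not verify the tightness condition $\sup_{\epsilon}\int_{\Omega\setminus\Omega_h}\|\te\veps\|^p\,dx\to 0$ as $h\to 0$, which is needed because the shift estimates of Lemma \ref{EstimateShiftsUnfoldingAndSequence} only hold on the interior sets $\Omega_{2h}$; this is where the restriction $p<2$ is actually used, via H\"older against the uniform $L^2$ bound. Second, the claim that hypothesis \ref{StrongTSConvergenceConditionShifts} yields smallness of the $z$-translations \emph{uniformly in $\epsilon$} requires the additional argument in the paper (since $\epsilon^{-1}\in\N$, only finitely many $\epsilon$ exceed any threshold $\epsilon_0$, so a common modulus can be chosen). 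Your concluding remark that $v_0$ is independent of $y$, via $\nabla_y\te\veps=\epsilon\,\te\nabla\veps\to 0$, is fine and consistent with the paper's appeal to standard two-scale compactness.
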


\begin{proof}
Our aim is to apply \cite[Corollary 2.5]{GahnNeussRaduKolmogorovCompactness} to  $(\te \veps )$ as a sequence in  $ L^p(\Omega, L^2((0,T),\H_2^{\beta}))$ for $p \in [1,2)$ and $\beta \in \left(\frac12,1\right)$. Hence, we have to check the following three conditions:
\begin{enumerate}[label=(K\arabic*)]
\item\label{KolmogorovConditionRange} For every measurable set $A\subset \Omega$ the sequence $\left\{ \int_A \veps dx \right\}$ is relatively compact in $L^2((0,T),\H_2^{\beta})$,
\item\label{KolmogorovConditionShifts} for all $0<h \ll 1$ and $|z|<h$ it holds that 
\begin{align*}
 \sup_{\epsilon} \| \veps(\cdot + z) - \veps \|_{L^p(\Omega_h,B)} \rightarrow 0 \quad \mbox{ for } z \to 0,
\end{align*}
\item\label{KolmogorovConditionBoundary} for $h>0$ it holds that
$\, \sup_{ \epsilon} \int_{\Omega \setminus \Omega_h} |\veps(x)|^p dx \rightarrow 0 \, \mbox{ for } h \to 0$.
\end{enumerate}
We start with the condition \ref{KolmogorovConditionRange}. Let $A \subset \Omega$ be measurable and we define $V_{\epsilon}(t,y):= \int_A \te \veps (t,x,y)dx$. To show the relative compactness of $(V_{\epsilon})$, we use again \cite[Theorem 1]{Simon} as in the proof of Proposition \ref{StrongConvergenceConnectedDomain}. First of all, due to our assumptions on $\veps$ and the properties of the unfolding operator, for $t_1,t_2 \in (0,T)$ it holds that
\begin{align*}
\left\| \int_{t_1}^{t_2} V_{\epsilon} dt  \right\|_{\H_2} \le  \|\te \veps \|_{L^2((0,T)\times \Omega, \H_2)} \le C.
\end{align*}
Due to the compact embedding $\H_2 \hookrightarrow \H_2^{\beta}$ we obtain that $\int_{t_1}^{t_2} V_{\epsilon} dt$ is relatively compact in $\H_2^{\beta}$.
 Further, for $0<s \ll 1$ we obtain with the estimates for $\veps$ and the trace inequality $\eqref{TraceInequality}$
\begin{align*}
\big\| &V_{\epsilon}(t + s,y) - V_{\epsilon}\big\|_{L^2((0,T-s),\H_2)} \le \big\|\te \veps (t + s,x,y) - \te \veps  \big\|_{L^2((0,T-s)\times \Omega, \H_2)}
\\
&\le C \|\veps (t+s,x) - \veps\|_{L^2((0,T-s) \times \oet)} + C\epsilon \|\nabla \veps (t+s,x) - \nabla \veps \|_{L^2((0,T-s) \times \oet)}
\\
&\hspace{3em} + C  \epsilon^{\frac32}\Vert \ngeps \veps(t+s,x) - \ngeps \veps \Vert_{L^2((0,T-s)\times \ge)}
\\
&\le C \left(s^{\frac14} + \epsilon\right),
\end{align*}
where for the last inequality we used  Lemma \ref{LemmaNikolskiiEmbedding} to estimate the first term in the line before by using the same arguments as for the inequality $\eqref{AuxiliaryEstimateShiftTime}$ in the proof of Proposition \ref{StrongConvergenceConnectedDomain}. Hence, \cite[Theorem 1]{Simon} implies that $(V_{\epsilon})$ is relatively compact in $L^2((0,T),\H_2^{\beta})$, \ie condition \ref{KolmogorovConditionRange}.

For \ref{KolmogorovConditionShifts} we fix $0< h \ll 1$ and choose $|z|< h$. Lemma \ref{EstimateShiftsUnfoldingAndSequence} with $l= k + \left[\frac{z}{\epsilon}\right]$ (see the definition of the difference $\delta$ in $\eqref{DefinitionDifference}$),  the conditions \ref{StrongTSConvergenceConditionAprioriEstimate} and \ref{StrongTSConvergenceConditionShifts}, as well as the trace inequality $\eqref{TraceInequality}$ imply
\begin{align*}
\big\| &\te \veps (t, x+ z,y) - \te \veps \big\|_{L^2(\Omega_{2h} , L^2((0,T),\H_2))} 
\\
&\le C \sum_{k \in \{0,1\}^n} \left( \|\delta \veps \|_{L^2((0,T)\times \oeht)} + \epsilon \|\delta \nabla \veps \|_{L^2((0,T)\times \oeht)} + \epsilon^{\frac32} \Vert \delta \ngeps \veps \Vert_{L^2((0,T)\times \geh)} \right)
\\
&\le C \left( \sum_{j \in \{0,1\}^n} \|\delta \veps \|_{L^2((0,T)\times \oeht)} + \epsilon \right) \overset{\epsilon,z \to 0}{\longrightarrow} 0.
\end{align*}
We show that this implies the uniform convergence in \ref{KolmogorovConditionShifts} with respect to $\epsilon$, see also \cite[p.710-711]{NeussJaeger_EffectiveTransmission} or \cite[p.1476-1477]{friesecke2002theorem}. Let $0 < \rho $. Due to our previous results there exist $0 < \epsilon_0, \, \delta_0$, such that for all $\epsilon \le \epsilon_0 $ and $\vert z\vert \le \delta_0$ it holds that
\begin{align}\label{UniformEstimateShifts}
\big\| \te \veps (t, x+ z,y) - \te \veps \big\|_{L^2(\Omega_{2h} , L^2((0,T),\H_2))}  \le \rho.
\end{align}
Since $\epsilon^{-1}  \in \N$, there are only finitely many elements $\epsilon_i$ with $i = 1,\ldots, N$, such that $\epsilon_0 < \epsilon_i$. For every $\epsilon_i$ there exists a $0 < \delta_i$, such that $\eqref{UniformEstimateShifts}$ is valid for $\epsilon = \epsilon_i$ and all $\vert z\vert \le \delta_i$. 
Choosing $\delta:= \max_{i=0,\ldots,N} \{\delta_i\}$, inequality $\eqref{UniformEstimateShifts}$ holds uniformly with respect to $\epsilon$ for all $\vert z \vert\le \delta$. This implies \ref{KolmogorovConditionShifts}.
For the last condition \ref{KolmogorovConditionBoundary} we   use the H\"older-inequality to obtain for $p \in [1,2)$ and $0< h \ll 1$
\begin{align*}
\big\|\te \veps \big\|_{L^p(\Omega \setminus \Omega_h, L^2((0,T),\H_2))} \le |\Omega \setminus \Omega_h|^{\frac{2-p}{2p}}\big\|\te \veps \big\|_{L^2((0,T)\times \Omega \setminus \Omega_h , \H_2)} \le Ch^{\frac{2-p}{2p}} \overset{h\to 0}{\longrightarrow}0,
\end{align*}
where we used again estimate \ref{StrongTSConvergenceConditionAprioriEstimate}. 
Now, \cite[Corollary 2.5]{GahnNeussRaduKolmogorovCompactness} implies the 
 the strong convergence of $\te \veps $ up to a subsequence in $L^p(\Omega,L^2((0,T),\H_2^{\beta}))$ to a limit function $v_0$. Lemma \ref{PropAequivalenzTSKonvergenzUnfolding}
   implies   the strong two-scale convergence of $\veps$ to the same limit. The fact $v_0 \in L^2((0,T),L^2(\Omega))$ follows from standard two-scale compactness results, see \cite{Allaire_TwoScaleKonvergenz}, based on the estimate \ref{StrongTSConvergenceConditionAprioriEstimate}.
\end{proof}

\begin{remark}
Theorem \ref{MainResultsGeneralStrongTSConvergence} and its proof remain valid if we replace $\Hteps$ and $\H^{\beta}$ by $H^1(\oet)$ and $H^{\beta}(Y_2)$.
\end{remark}

\section{Derivation of the macroscopic model}
\label{SectionDerivationMacroscopicModel}

The aim of this section is the derivation of the macroscopic model $\eqref{MacroscopicModel}$ from Theorem \ref{MainResultMacroscopicModel} for $\epsilon \to 0$. Therefore we make use of compactness results from Section \ref{SectionTwoScaleConvergenceUnfolding} and the \textit{a priori} estimates from Section \ref{SectionExistence}. In the following Proposition we collect the convergence results for the microscopic solution $\ueps= (\uepso,\uepst)$:
\begin{proposition}\label{PropositionConvergenceResults}
Let $\ueps = (\uepso,\uepst)$ be the microscopic solution of the problem $\eqref{MicroscopicModel}$. There exist 
\begin{align*}
u_0^1 \in L^2((0,T),H^1(\Omega)),\quad
u_1^1 \in L^2((0,T), \Ho / \R), \quad 
u_0^2 \in L^2((0,T)\times \Omega),
\end{align*}
\begin{subequations}
such that up to a subsequence it holds for $p\in [1,2)$ 
\begin{align}
\uepso &\rightarrow u_0^1 &\mbox{ strongly in the two-scale sense},
\label{TSConvMicSolConnectedDomainSol}
\\
\nabla \uepso &\rightarrow \nabla u_0^1 + \nabla_y u_1^1 &\mbox{ in the two-scale sense},
\label{TSConvMicSolConnectedDomainGradient}
\\
\uepso|_{\ge} &\rightarrow u_0^1 &\mbox{ strongly in the two-scale sense on } \ge ,
\label{TSConvMicSolConnectedDomainSurfaceSol}
\\
\ngeps \uepso &\rightarrow P_{\Gamma} \nabla u_0^1 + \ngy u_1^1 &\mbox{ in the two-scale sense on } \ge,
\label{TSConvMicSolConnectDomainSurfaceGradient}
\\
\uepst &\rightarrow u_0^2 &\mbox{ strongly in the two-scale sense in } L^p,
\label{TSConvMicSolDisconnectedDomainSol}
\\
\nabla \uepst &\rightarrow 0 &\mbox{ in the two-scale sense},
\label{TSConvMicSolDisconnectedDomainGradient}
\\
\uepst|_{\ge} &\rightarrow u_0^2 &\mbox{ strongly in the two-scale sense in } L^p \mbox{ on } \ge,
\label{TSConvMicSolDisconnectedDomainSurfaceSol}
\\
\ngeps \uepst &\rightarrow 0 &\mbox{ in the two-scale sense on } \ge.
\label{TSConvMicSolDisconnectedDomainSurfaceGradient}
\end{align}
\end{subequations}

\end{proposition}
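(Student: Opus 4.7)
The plan is to apply the two-scale compactness results from Section \ref{SectionTwoScaleConvergenceUnfolding} to the microscopic solution, using the uniform bounds from Propositions \ref{PropositionAprioriEstimates} and \ref{PropositionEstimateShifts}. The two components $\oeo$ and $\oet$ are treated separately, with a crucial bootstrap linking them: the strong convergence in $\oeo$ feeds into the right-hand side of the shift estimate in Proposition \ref{PropositionEstimateShifts}, which in turn provides the shift hypothesis needed to apply Theorem \ref{MainResultsGeneralStrongTSConvergence} in the disconnected component.

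For the connected component, the a priori estimate on $\uepso$ together with the time-derivative bound places us in the setting of Proposition \ref{StrongConvergenceConnectedDomain}, yielding strong convergence of the extension $\tuepso$ in $L^2((0,T), H^{\beta}(\Omega))$ for $\beta \in (1/2,1)$ and hence \eqref{TSConvMicSolConnectedDomainSol}. The corrector decomposition \eqref{TSConvMicSolConnectedDomainGradient} follows from the standard Nguetseng-Allaire two-scale compactness result. The strong two-scale convergence of the trace \eqref{TSConvMicSolConnectedDomainSurfaceSol} comes from combining the convergence of $\te \uepso$ in $L^2((0,T)\times\Omega,\Ho)$, already established inside Proposition \ref{StrongConvergenceConnectedDomain}, with the continuity of the trace operator $H^1(Y_1) \to L^2(\Gamma)$. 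For the surface gradient \eqref{TSConvMicSolConnectDomainSurfaceGradient}, the bound $\sqrt{\epsilon}\|\ngeps \uepso\|_{L^2((0,T)\times\ge)} \le C$ allows application of the compactness results developed in \cite{Gahn2019} (compare also \cite{AllaireHutridurga2012,GrafPeterHomogenizationManifolds}) tailored to the spaces $\Hjeps$, producing the decomposition $P_\Gamma \nabla u_0^1 + \ngy u_1^1$ with the same corrector $u_1^1$ that appears in the bulk.

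For the disconnected component, the strategy is to verify the two hypotheses of Theorem \ref{MainResultsGeneralStrongTSConvergence}. Condition (i) is exactly Proposition \ref{PropositionAprioriEstimates}. For the shift condition (ii) we invoke Proposition \ref{PropositionEstimateShifts}, which bounds $\|\delta \uepst\|$ by three terms: $\|\delta \uepso\|_{L^2((0,T),L^2(\oeho))}$, $\|\delta(u_{\epsilon,i}^2, u_{\epsilon,i,\ge}^2)\|_{\Ltepsh}$, and $\epsilon$. The first tends to zero as $\epsilon l \to 0$ because the strong $L^2((0,T)\times\Omega)$ convergence of $\tuepso$ just established controls its shifts uniformly in $\epsilon$. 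The second vanishes by the assumed strong two-scale convergence of the initial data in \ref{AssumptionInitialConditions}, via the characterization of strong two-scale convergence through the unfolding operator and Lemma \ref{EstimateShiftsUnfoldingAndSequence}. The third is trivial. Theorem \ref{MainResultsGeneralStrongTSConvergence} then yields \eqref{TSConvMicSolDisconnectedDomainSol} and \eqref{TSConvMicSolDisconnectedDomainSurfaceSol}, with a limit $u_0^2 \in L^2((0,T)\times\Omega)$ independent of the microscopic variable. The convergences \eqref{TSConvMicSolDisconnectedDomainGradient} and \eqref{TSConvMicSolDisconnectedDomainSurfaceGradient} are then immediate: since $\nabla \uepst$ and $\sqrt{\epsilon}\,\ngeps \uepst$ are $L^2$-bounded, standard weak two-scale compactness forces their limits to take the form $\nabla_y u_0^2$ and $\ngy u_0^2$, both of which vanish by $y$-independence of $u_0^2$.

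The main obstacle is the disconnected component, where no uniform-in-$\epsilon$ extension to $\Omega$ is available and strong convergence cannot be obtained by the standard Aubin-Lions-type route used in the connected case. The entire argument in $\oet$ must be funneled through the shift estimate of Proposition \ref{PropositionEstimateShifts} and the Kolmogorov-Simon-type Theorem \ref{MainResultsGeneralStrongTSConvergence}, and it is the propagation of strong convergence from $\oeo$ to $\oet$ via the coupling through the nonlinear interface term $h^2$ that makes the bootstrap work. Once the shift hypothesis is verified, all remaining convergences follow by routine weak two-scale compactness arguments.
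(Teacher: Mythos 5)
Your treatment of the connected component and of the strong two-scale convergences \eqref{TSConvMicSolDisconnectedDomainSol}, \eqref{TSConvMicSolDisconnectedDomainSurfaceSol} matches the paper's proof: the same bootstrap through Proposition \ref{PropositionEstimateShifts} into Theorem \ref{MainResultsGeneralStrongTSConvergence}, with the shifts of $\uepso$ controlled by its already-established strong convergence and the shifts of the initial data by Assumption \ref{AssumptionInitialConditions}.

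There is, however, a genuine gap in your argument for \eqref{TSConvMicSolDisconnectedDomainGradient} and \eqref{TSConvMicSolDisconnectedDomainSurfaceGradient}. You claim that weak two-scale compactness forces the limits of $\nabla\uepst$ and $\ngeps\uepst$ to be $\nabla_y u_0^2$ and $\ngy u_0^2$, which then vanish because $u_0^2$ is independent of $y$. This is not what the compactness result gives. For a bounded sequence in $L^2((0,T),\Hteps)$ on the \emph{disconnected} domain, Lemma \ref{LemmaTSConvergence}(ii) yields $\nabla\uepst \rightarrow \nabla_y u_1^2$ and $\ngeps\uepst \rightarrow \ngy u_1^2$, where $u_1^2 \in L^2((0,T)\times\Omega,\H_2/\R)$ is a \emph{new corrector}, a priori unrelated to $u_0^2$. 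The $y$-independence of $u_0^2$ says nothing about $u_1^2$, and compactness alone cannot rule out a nontrivial corrector: that requires using the equation. The paper closes this step by inserting the oscillating test function $\peps(t,x)=\epsilon\phi\left(t,x,\fxe\right)$ with $\phi\in C_0^{\infty}((0,T)\times\Omega\times\overline{Y_2})$ into \eqref{VariationalEquationMicroscopicProblem} for $j=2$, checking via Proposition \ref{PropositionAprioriEstimates} that all terms except those containing $\nabla_y\phi$ and $\ngy\phi$ are $O(\epsilon)$, and passing to the limit to obtain
\begin{align*}
\int_0^T\!\!\int_{\Omega}\!\int_{Y_2} D^2 \nabla_y u_1^2\cdot\nabla_y\phi\, dy\, dx\, dt + \int_0^T\!\!\int_{\Omega}\!\int_{\Gamma} D_{\Gamma}^2 \ngy u_1^2\cdot\ngy\phi\, d\sigma_y\, dx\, dt = 0,
\end{align*}
which by density of $C^{\infty}(\overline{Y_2})$ in $\H_2$ and coercivity forces $u_1^2=0$. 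Without some version of this argument your proof of the two gradient convergences does not go through.
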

\begin{proof}
The convergence results $\eqref{TSConvMicSolConnectedDomainSol}$ - $\eqref{TSConvMicSolConnectDomainSurfaceGradient}$ follow immediately from Proposition \ref{StrongConvergenceConnectedDomain}, Lemma \ref{LemmaTSConvergence},  and the \textit{a priori} estimates in Proposition \ref{PropositionAprioriEstimates}.
\\
For $\eqref{TSConvMicSolDisconnectedDomainSol} $ - $\eqref{TSConvMicSolDisconnectedDomainSurfaceGradient}$ we first notice that due to Lemma \ref{LemmaTSConvergence}  there exists $u_0^2 \in L^2((0,T)\times \Omega)$ and $u_1^2 \in L^2((0,T)\times \Omega, \H_2/\R)$, such that up to a subsequence
\begin{align*}
\uepst &\rightarrow u_0^2 &\mbox{ in the two-scale sense}&,
\\
\nabla \uepst &\rightarrow \nabla_y u_1^2 &\mbox{ in the two-scale sense}&,
\\
\uepst|_{\ge} &\rightarrow u_0^2 &\mbox{ in the two-scale sense on }& \ge,
\\
\ngeps \uepst|_{\ge} &\rightarrow \nabla_{\Gamma} u_1^2 &\mbox{ in the two-scale sense on }& \ge.
\end{align*}

For the strong two-scale convergence of $\uepst$ and $\uepst|_{\ge}$ we make use of Theorem \ref{MainResultsGeneralStrongTSConvergence}, where we have to check the conditions \ref{StrongTSConvergenceConditionAprioriEstimate} and \ref{StrongTSConvergenceConditionShifts}. The first one is just the \textit{a priori} estimate from Proposition \ref{PropositionAprioriEstimates}. For \ref{StrongTSConvergenceConditionShifts} we use Proposition \ref{PropositionEstimateShifts} to obtain for fixed $0< h \ll 1$ and $l\in \Z^n$ with $\epsilon|l|<h$
\begin{align*}
\|\delta \uepst \|_{L^2((0,T),L^2(\oeht))} \le C \left(\|\delta \uepso \|_{L^2((0,T),L^2(\oeho))} + \big\|\delta \big(u_{\epsilon,i}^2,u_{\epsilon,i,\ge}^2\big)\big\|_{\Ltepsh} + \epsilon \right).
\end{align*}
For the first term on the right-hand side we have
\begin{align*}
\|\delta \uepso \|_{L^2((0,T),L^2(\oeho))} \le \big\|\te \uepso (x + l\epsilon,y)  - \te \uepso\big\|_{L^2((0,T)\times \Omega_h \times Y_1)}.
\end{align*}
The right-hand side converges to zero, due to the strong two-scale convergence of $\uepso$, \ie the strong convergence of $\te \uepso$ in $L^2((0,T)\times \Omega \times Y_1)$, and the standard Kolmogorov-compactness theorem. For the $L^2$-norm of $\delta u_{\epsilon,i}^2$ in the second term we argue in a similar way, where we can use the strong two-scale convergence in the Assumption \ref{AssumptionInitialConditions}. For the norm of $\delta u_{\epsilon,i,\ge}^2$ we can    use the Kolmogorov-Simon-compactness result from \cite[Corollary 2.5]{GahnNeussRaduKolmogorovCompactness}, applied to the strong convergent sequence $\big(\te u_{\epsilon,i,\ge}^2\big) $ in $L^2((0,T)\times \Omega,L^2(\Gamma))$.

To prove $\eqref{TSConvMicSolDisconnectedDomainGradient}$ and $\eqref{TSConvMicSolDisconnectedDomainSurfaceGradient}$ we have to show that $u_1^2$ is constant with respect to $y$. Therefore, we choose $\peps(t,x):= \epsilon \phi\left(t,x,\fxe\right)$ with $\phi \in C^{\infty}_0((0,T)\times \Omega \times \overline{Y_2})$ (periodically extended in the last variable) as a test-function in $\eqref{VariationalEquationMicroscopicProblem}$ for $j=2$ and integrate with respect to time to obtain
\begin{align}
\begin{aligned}
\label{HilfsgleichungGrenzwertNullGradientUepsTwo}
\int_0^T\langle &\partial_t \uepst , \peps\rangle_{\Hteps',\Hteps} dt  + \int_0^T \int_{\oet} D_{\epsilon}^2\nabla \uepst \cdot \left( \epsilon \nabla_x \phi\left(t,x,\fxe\right) + \nabla_y \phi\left(t,x,\fxe\right)  \right)dxdt 
\\
&+ \epsilon \int_0^T \int_{\ge}  D_{\ge}^2 \ngeps \uepst \cdot \left( \epsilon P_{\ge} \nabla_x \phi\left(t,x,\fxe\right) + P_{\ge} \nabla_y \phi\left(t,x,\fxe\right) \right) d\sigma dt
\\
=&  \epsilon \int_0^T \int_{\oet} f_{\epsilon}^2 (\uepst) \phi\left(t,x,\fxe\right) dx dt + \epsilon^2 \int_0^T \int_{\oet} h_{\epsilon}^2(\uepso,\uepst) \phi\left(t,x,\fxe\right)dx dt.
\end{aligned}
\end{align}
For the first term on the left-hand side including the time-derivative we get by integration by parts in time
\begin{align*}
\int_0^T \langle \partial_t \uepst , \peps \rangle_{\Hteps',\Hteps} dt = -\epsilon&\int_0^T \int_{\oet} \partial_t \phi\left(t,x,\fxe\right) \uepst dx dt 
\\
&- \epsilon^2 \int_0^T \int_{\ge} \partial_t \phi\left(t,x,\fxe\right)\uepst d\sigma dt.
\end{align*}
The right-hand side is of order $\epsilon$, due to the estimates in Proposition \ref{PropositionAprioriEstimates}. Hence, all terms in $\eqref{HilfsgleichungGrenzwertNullGradientUepsTwo}$ except the terms including the $\nabla_y$ are of order $\epsilon$ (again because of Proposition \ref{PropositionAprioriEstimates}) and we obtain for $\epsilon \to 0$
\begin{align*}
\int_0^T \int_{\Omega} \int_{Y_2} &D^2(y) \nabla_y u_1^2(t,x,y) \cdot \nabla_y \phi(t,x,y) dy dx dt 
\\
&+ \int_0^T \int_{\Omega}\int_{\Gamma} D_{\Gamma}^2(y) \ngy u_1^2(t,x,y) \cdot \ngy \phi(t,x,y) d\sigma_y dx dt = 0 .
\end{align*}
Due to the density of $C^{\infty}(\overline{Y_2})$ in $\H_2$, see \cite[Lemma 2.1]{Gahn2019}, the equation above holds for all $\phi \in L^2((0,T)\times \Omega,\H_2)$. This implies $u_1^2 = 0$. The Proposition is proved.

\end{proof}

We have the following representation of $u_1^1$:

\begin{corollary}\label{CorollaryRepresentation}
Almost everywhere in $(0,T)\times \Omega \times Y_1$ it holds that
\begin{align}
\label{RepresentationUoo}
u_1^1(t,x,y) = \sum_{i=1}^n \partial_{x_i} u_0^1(t,x) w_i^1(y),
\end{align}
where $w_i^1 \in \Ho/\R$ with $Y$-periodic boundary conditions is the unique weak solution of the following cell problem ($i=1,\ldots, n$)
\begin{align*}
-\nabla_y \cdot \big(D^1 (\nabla_y w_i^1 + e_i) \big) &= 0 &\mbox{ in } Y_1,
\\
-D^1(\nabla_y w_i^1 + e_i ) \cdot \nu &= - \ngy \cdot \big(D_{\Gamma}^1 (\ngy w_i^1 + \ngy y_i )\big) &\mbox{ on } \Gamma,
\\
w_i^1 \mbox{ is }& Y\mbox{-periodic and } \int_{\Gamma} w_i^1 d\sigma = 0.
\end{align*}
\end{corollary}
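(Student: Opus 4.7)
The plan is to mimic the strategy used at the end of the proof of Proposition \ref{PropositionConvergenceResults} (where $u_1^2=0$ was obtained) but now applied to the \emph{connected} component, and to extract the cell problem by passing to the two-scale limit in \eqref{VariationalEquationMicroscopicProblem} with oscillating test functions. Concretely, for arbitrary $\phi \in C_0^{\infty}((0,T)\times \Omega)$ and $\psi \in \Ho$ (extended $Y$-periodically), I would plug $\peps(t,x):= \epsilon \phi(t,x)\psi\!\left(\fxe\right)$ into \eqref{VariationalEquationMicroscopicProblem} for $j=1$ and integrate in time. Note that $\peps(t,\cdot)\in \Hoeps$ is admissible precisely because $\psi|_{\Gamma}\in H^{1}(\Gamma)$ gives the required $H^{1}(\ge)$-regularity of the trace. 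The relevant derivatives are
\begin{align*}
\nabla \peps = \epsilon\, \nabla_x\phi\cdot \psi\!\left(\fxe\right) + \phi\,\nabla_y\psi\!\left(\fxe\right),\qquad
\ngeps \peps = \epsilon\, P_{\ge}\nabla_x\phi\cdot \psi\!\left(\fxe\right) + \phi\,\ngy\psi\!\left(\fxe\right)\text{ on }\ge.
\end{align*}

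Next I would show that the time-derivative and reaction terms do not contribute. Integration by parts in time replaces $\int_0^T\langle \partial_t \uepso,\peps\rangle dt$ by $-\epsilon\int_0^T(\uepso,\partial_t\phi\cdot\psi)_{L^2(\oeo)}dt - \epsilon^{2}\int_0^T\int_{\ge}\uepso\,\partial_t\phi\cdot\psi\,d\sigma dt$, both of order $\epsilon$ by Proposition \ref{PropositionAprioriEstimates} together with the scaled surface bound $\sqrt{\epsilon}\|\uepso\|_{L^2(\ge)}\le C$. The nonlinear terms $(f_{\epsilon}^1(\uepso),\peps)_{\oeo}$ and $\epsilon(h_{\epsilon}^1(\uepso,\uepst),\peps)_{\ge}$ are $O(\epsilon)$ and $O(\epsilon^{3/2})$ respectively, by \ref{Assumptionfj}, \ref{Assumptionhj}. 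For the remaining diffusive terms I use the two-scale convergences \eqref{TSConvMicSolConnectedDomainGradient} and \eqref{TSConvMicSolConnectDomainSurfaceGradient}: the bulk part tends to $\int_0^T\!\int_{\Omega}\!\int_{Y_1}D^1(\nabla u_0^1+\nabla_y u_1^1)\cdot \nabla_y\psi\,\phi\, dy\,dx\,dt$, and in the surface term the contribution of $\epsilon P_{\ge}\nabla_x\phi\cdot\psi$ is $O(\epsilon)$, leaving the limit $\int_0^T\!\int_{\Omega}\!\int_{\Gamma}D_{\Gamma}^1(P_{\Gamma}\nabla u_0^1+\ngy u_1^1)\cdot \ngy \psi\,\phi\,d\sigma_y\,dx\,dt$.

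Since $\phi$ is arbitrary, localization in $(t,x)$ gives, for a.e. $(t,x)\in (0,T)\times \Omega$ and all $Y$-periodic $\psi\in \Ho$,
\begin{align*}
\int_{Y_1} D^1(\nabla u_0^1+\nabla_y u_1^1)\cdot\nabla_y\psi\,dy + \int_{\Gamma} D_{\Gamma}^1(P_{\Gamma}\nabla u_0^1+\ngy u_1^1)\cdot \ngy\psi\,d\sigma_y = 0.
\end{align*}
The bilinear form on the left is coercive on $\Ho/\R$ (restricted to the periodic subspace) by the coercivity assumptions \ref{AssumptionDBulk}, \ref{AssumptionDSurface} combined with a Poincaré inequality, so Lax--Milgram yields a unique $u_1^1\in \Ho/\R$ determined by $\nabla u_0^1$. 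Using $P_{\Gamma}e_i=\ngy y_i$ and the linearity of the problem in $\nabla u_0^1$, the representation \eqref{RepresentationUoo} follows with $w_i^1$ the unique solution in $\Ho/\R$ of the variational problem obtained by substituting $\nabla u_0^1\mapsto e_i$. The strong form in the statement is then recovered by integration by parts in $Y_1$ together with the surface divergence theorem on the closed manifold $\Gamma$ (no boundary terms), matching the sign convention that $\nu$ is the outer normal to $Y_1$ on $\Gamma$.

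The only real subtleties I anticipate are (i) verifying admissibility of $\peps$ in $\Hoeps$ via density of smooth functions in $\Ho$ (available thanks to \cite[Proposition 5]{Gahn2019}), and (ii) correctly tracking the $\epsilon$-scalings in the two surface terms so that only the $\phi\ngy\psi$ contribution survives in the limit while $\epsilon P_{\ge}\nabla\phi\cdot\psi$ is discarded; both are technical but not conceptually difficult.
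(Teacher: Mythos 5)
Your proposal is correct and follows essentially the same route as the paper: testing \eqref{VariationalEquationMicroscopicProblem} for $j=1$ with an $\epsilon$-scaled oscillating test function, discarding the time-derivative and reaction terms as $O(\epsilon)$, passing to the two-scale limit in the diffusive terms via \eqref{TSConvMicSolConnectedDomainGradient} and \eqref{TSConvMicSolConnectDomainSurfaceGradient}, and concluding by Lax--Milgram and linearity. The only cosmetic difference is that you use tensor-product test functions $\phi(t,x)\psi(y)$ with a density argument, whereas the paper works directly with $\phi \in C^{\infty}_0((0,T)\times \Omega, C_{\per}^{\infty}(\overline{Y_1}))$; this changes nothing of substance.
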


\begin{proof}
The procedure is quite standard, see \eg \, \cite{Allaire_TwoScaleKonvergenz}, but for the sake of completeness we give the main steps. We choose $\peps(t,x) = \epsilon \phi\left(t,x,\fxe\right)$ with $\phi \in C^{\infty}_0 ((0,T)\times \Omega , C_{\per}^{\infty}(\overline{Y_1}))$ as  a test-function in $\eqref{VariationalEquationMicroscopicProblem}$ and integrate with respect to time to obtain $\eqref{HilfsgleichungGrenzwertNullGradientUepsTwo}$ if we replace $j=2$ by $j=1$. From Proposition \ref{PropositionConvergenceResults} we get for $\epsilon \to 0$
\begin{align*}
0 = &\int_0^T \int_{\Omega} \int_{Y_1} D^1(y) \left[\nabla_x u_0^1(t,x) + \nabla_y u_1^1(t,x,y) \right] \cdot \nabla_y \phi(t,x,y)dy dx dt
\\
&+ \int_0^T \int_{\Omega} \int_{\Gamma} D_{\Gamma}^1(y) \left[ P_{\Gamma} (y) \nabla_x u_0^1(t,x) + \ngy u_1^1(t,x,y) \right]\cdot \ngy \phi(t,x,y) d\sigma_y dx dt.
\end{align*}
Due to the Lax-Milgram Lemma this problem has a unique solution $u_1^1$ and due to its linearity we easily obtain the representation $\eqref{RepresentationUoo}$.
\end{proof}

%
%
%
%

Now, we are able to formulate the macroscopic model. We show that $u_0=(u_0^1,u_0^2)$ from Proposition \ref{PropositionConvergenceResults} is the unique weak solution (the definition of a weak solution is given below) of the macro-model
%
\begin{align}
\begin{aligned}\label{MacroscopicModel}
(|Y_1| + |\Gamma|) \partial_t u_0^1 - \nabla \cdot \big(\hd^1 \nabla u_0^1 \big) &= \int_{Y_1} f^1(t,y,u_0^1) dy + \int_{\Gamma} h^1(t,y,u_0^1,u_0^2) d\sigma_y &\mbox{ in }& (0,T)\times \Omega,
\\
(|Y_2| + |\Gamma|) \partial_t u_0^2 &= \int_{Y_2} f^2(t,y,u_0^2) dy + \int_{\Gamma} h^2(t,y,u_0^1,u_0^2) d\sigma_y &\mbox{ in }& (0,T)\times \Omega,
\\ 
-\hd^1\nabla u_0^1 \cdot \nu &= 0 &\mbox{ on }& (0,T)\times \partial \Omega,
\\
u_0^j(0) &= \frac{|Y_j| u_{0,i}^j + |\Gamma| u_{0,i,\Gamma}^j}{|Y_j| +  |\Gamma|} &\mbox{ in }& \Omega,
\end{aligned}
\end{align}
where the homogenized diffusion coefficient  $\hd^1\in \R^{n\times n}$ is defined by ($i,l= 1,\ldots ,n$)
\begin{align*}
\big(\hd^1\big)_{il}:= \int_{Y_1} D^1 (&\nabla_y w_i^1 + e_i ) \cdot (\nabla_y w_l^1 + e_l ) dy 
\\
&+ \int_{\Gamma} D_{\Gamma}^1 (\ngy w_i^1 + \ngy y_i )\cdot (\ngy w_l^1 + \ngy y_l ) d\sigma,
\end{align*}
and  $w_i^1 \in \Ho /\R$ (see Section \ref{SectionFunctionsSpaces} for the definition of this space)  for $i=1,\ldots,n$ are the solutions of the cell problems
\begin{align}
\begin{aligned}\label{CellProblems}
-\nabla_y \cdot \big(D^1 (\nabla_y w_i^1 + e_i) \big) &= 0 &\mbox{ in } Y_1,
\\
-D^1(\nabla_y w_i^1 + e_i ) \cdot \nu &= - \ngy \cdot \big(D_{\Gamma}^1 (\ngy w_i^1 + \ngy y_i )\big) &\mbox{ on } \Gamma,
\\
w_i^1 \mbox{ is }& Y\mbox{-periodic and } \int_{\Gamma} w_i^1 d\sigma = 0.
\end{aligned}
\end{align}
We say that $u_0=(u_0^1,u_0^2)$ is a weak solution of the macroscopic model, if 
\begin{align*}
u_0^1 &\in L^2((0,T),H^1(\Omega))\cap H^1((0,T),H^1(\Omega)'),
\\
u_0^2 &\in L^2((0,T)\times \Omega)\cap H^1((0,T),L^2(\Omega)), 
\end{align*}
the equation for $\partial_t u_0^2$ in $\eqref{MacroscopicModel}$ is valid in $L^2((0,T) \times \Omega)$, and for all $\phi \in H^1(\Omega)$ it holds almost everywhere in $(0,T)$
\begin{align*}
(|Y_1| + |\Gamma|) \big \langle \partial_t u_0^1 , \phi &\big\rangle_{H^1(\Omega)',H^1(\Omega)} + \int_{\Omega} \hd^1 \nabla u_0^1 \cdot \nabla \phi dx 
\\
&= \int_{\Omega} \int_{Y_1} f^1(y,u_0^1) \phi dy dx  + \int_{\Omega}\int_{\Gamma} h^1(y,u_0^1,u_0^2) \phi d\sigma_ydx,
\end{align*}
together with the initial conditions from $\eqref{MacroscopicModel}$.
%
\begin{theorem}\label{MainResultMacroscopicModel}
The limit function $u_0= (u_0^1,u_0^2)$ from Proposition \ref{PropositionConvergenceResults} is the unique solution of the macroscopic problem $\eqref{MacroscopicModel}$.
\end{theorem}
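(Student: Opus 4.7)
The plan is to pass to the limit $\epsilon \to 0$ in the microscopic variational equation \eqref{VariationalEquationMicroscopicProblem} using the convergences collected in Proposition \ref{PropositionConvergenceResults} together with Lemma \ref{LemmaStrongTSSurface} to handle the nonlinearities. For $j=1$ I would use two-scale test functions $\peps^1(t,x) := \phi_0(t,x) + \epsilon \phi_1(t,x,x/\epsilon)$ with $\phi_0 \in C^{\infty}([0,T] \times \overline{\Omega})$ and $\phi_1 \in C_0^{\infty}([0,T] \times \Omega, C_{\per}^{\infty}(\overline{Y_1}))$, and for $j=2$ simply $\peps^2(t,x) := \phi(t,x) \in C^{\infty}([0,T] \times \overline{\Omega})$ (which is sufficient because by Proposition \ref{PropositionConvergenceResults} both $u_0^2$ and the would-be corrector $u_1^2$ are independent of $y$). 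After multiplying by $\psi \in C^{\infty}([0,T])$ with $\psi(T) = 0$, integrating in time, and performing integration by parts on the duality pairing $\langle \partial_t \uepsj, \peps^j\rangle_{\Hjeps',\Hjeps}$, the time-derivative contribution becomes
\[
-\int_0^T (\uepsj, \peps^j)_{\Ljeps} \psi'(t)\, dt - (u_{\epsilon,i}^j, \phi_0(0,\cdot))_{L^2(\oej)} \psi(0) - \epsilon (u_{\epsilon,i,\ge}^j, \phi_0(0,\cdot))_{L^2(\ge)} \psi(0),
\]
and the strong two-scale convergences of $\uepsj$ and $\uepsj|_{\ge}$ from Proposition \ref{PropositionConvergenceResults}, combined with assumption \ref{AssumptionInitialConditions}, produce the weight $|Y_j|$ from the bulk inner product and $|\Gamma|$ from the $\epsilon$-scaled surface inner product. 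This is precisely what generates the factor $(|Y_j| + |\Gamma|)$ in front of $\partial_t u_0^j$ in \eqref{MacroscopicModel} together with the weighted-average initial condition.

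For $j=1$ the main computation is in the diffusion terms. Using \eqref{TSConvMicSolConnectedDomainGradient} and \eqref{TSConvMicSolConnectDomainSurfaceGradient}, the bulk diffusion converges to $\int_{\Omega} \int_{Y_1} D^1 (\nabla u_0^1 + \nabla_y u_1^1) \cdot (\nabla_x \phi_0 + \nabla_y \phi_1)\, dy\, dx$ and the surface diffusion to the analogous expression on $\Gamma$ with $D_{\Gamma}^1$, $P_{\Gamma}\nabla u_0^1 + \ngy u_1^1$, and $P_{\Gamma}\nabla_x\phi_0 + \ngy \phi_1$. Varying only $\phi_1$ recovers the cell problems \eqref{CellProblems} and the representation \eqref{RepresentationUoo} already derived in Corollary \ref{CorollaryRepresentation}; inserting \eqref{RepresentationUoo} into the expression obtained by varying only $\phi_0$ collapses the limit to $\int_{\Omega} \hd^1 \nabla u_0^1 \cdot \nabla \phi_0\, dx$, with $\hd^1$ as in \eqref{MacroscopicModel} after using the cell equation for $w_\ell^1$ tested against $w_i^1$ to rewrite the result in the symmetric form. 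The nonlinear reaction and exchange terms are handled by Lemma \ref{LemmaStrongTSSurface} applied to the strong two-scale convergences \eqref{TSConvMicSolConnectedDomainSol}, \eqref{TSConvMicSolConnectedDomainSurfaceSol}, \eqref{TSConvMicSolDisconnectedDomainSol}, \eqref{TSConvMicSolDisconnectedDomainSurfaceSol}, producing the $Y_1$- and $\Gamma$-averaged source terms on the right-hand side of \eqref{MacroscopicModel}.

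For $j=2$ both diffusion terms vanish in the limit thanks to \eqref{TSConvMicSolDisconnectedDomainGradient} and \eqref{TSConvMicSolDisconnectedDomainSurfaceGradient}, so the reduced weak equation, after the same integration-by-parts procedure, determines $u_0^2$ through a relation involving only the time derivative and the $Y_2$- and $\Gamma$-averaged reaction rates. Since the right-hand side of the equation for $\partial_t u_0^2$ lies in $L^2((0,T) \times \Omega)$ by assumptions \ref{Assumptionfj} and \ref{Assumptionhj}, one concludes $\partial_t u_0^2 \in L^2((0,T) \times \Omega)$ with the equation holding pointwise in $L^2$, together with the initial condition $u_0^2(0) = (|Y_2| u_{0,i}^2 + |\Gamma| u_{0,i,\Gamma}^2)/(|Y_2| + |\Gamma|)$. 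Uniqueness of $(u_0^1,u_0^2)$ then follows by a standard energy argument applied to the difference of two solutions, combining the Lipschitz conditions \ref{Assumptionfj} and \ref{Assumptionhj}, the coercivity of $\hd^1$, and Gronwall's inequality.

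The main obstacle, in my view, is not any single limit passage but the careful bookkeeping of the bulk and surface contributions generated by the scaled Wentzell terms $\epsilon(\cdot)_{\ge}$ in \eqref{VariationalEquationMicroscopicProblem}. It is precisely this scaling that produces the non-standard weights $|Y_j| + |\Gamma|$ in front of the macroscopic time derivatives and in the weighted-average initial data, as well as the extra $\Gamma$-contribution inside the homogenized diffusion tensor $\hd^1$. Once this bookkeeping is kept consistent through the integration-by-parts step, the two-scale limit, and the identification of the correctors via the cell problems \eqref{CellProblems}, the remainder is a systematic application of the compactness machinery proved in Section \ref{SectionTwoScaleConvergenceUnfolding}.
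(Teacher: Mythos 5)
Your proposal is correct and follows essentially the same route as the paper: pass to the limit in \eqref{VariationalEquationMicroscopicProblem} with time-dependent test functions vanishing at $t=T$, integrate by parts in time to recover the $(|Y_j|+|\Gamma|)$-weighted time derivative and averaged initial data, use Proposition \ref{PropositionConvergenceResults} and Lemma \ref{LemmaStrongTSSurface} for the linear and nonlinear terms, and identify $\hd^1$ via Corollary \ref{CorollaryRepresentation}. The only cosmetic difference is that you carry the corrector $\epsilon\phi_1$ inside the test function for $j=1$, whereas the paper isolates that step in Corollary \ref{CorollaryRepresentation} and tests the theorem itself only with $\phi(t,x)$; both yield the same cell problems and effective tensor.
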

%
%
%
%

\begin{proof}
 We illustrate the procedure for $j=1$ (the  case $j=2$ follows by similar arguments, where the diffusion terms vanishes in the limit). As a test-function in $\eqref{VariationalEquationMicroscopicProblem}$ for $j=1$ we choose $\phi \in C^\infty_0([0,T)\times \overline{\Omega}) $ and integrate with respect to time.  By integration by parts in time we obtain
\begin{align*}
-\int_0^T& \int_{\oeo} \uepso \partial_t \phi dx dt - \epsilon \int_0^T \int_{\ge} \uepso \partial_t \phi d\sigma dt 
\\
&+ \int_0^T\int_{\oeo} D_{\epsilon}^1 \nabla \uepso \cdot \nabla \phi dx dt   + \epsilon \int_0^T \int_{\ge} D_{\ge}^1 \ngeps \uepso \cdot \ngeps \phi d\sigma dt 
\\
=& \int_0^T \int_{\oeo} f_{\epsilon}^1(\uepso) \phi dx dt + \epsilon \int_0^T \int_{\ge} h_{\epsilon}^1(\uepso,\uepst) \phi d\sigma dt
\\
&+ \int_{\oeo} u_{\epsilon,i}^1 \phi dx + \epsilon \int_{\ge} u_{\epsilon,i,\ge}^1 \phi d\sigma
.
\end{align*}
Using the convergence results from Proposition \ref{PropositionConvergenceResults}, Corollary \ref{CorollaryRepresentation}, and Lemma \ref{LemmaStrongTSSurface}, as well as the Assumption \ref{AssumptionInitialConditions} on the initial conditions, we obtain for $\epsilon \to 0$
\begin{align*}
-\big(|Y_1|+|\Gamma|\big)&  \int_0^T \int_{\Omega} u_0^1 \partial_t \phi dx dt  + \int_0^T \int_{\Omega} \hd^1 \nabla u_0^1 \cdot \nabla \phi dx dt
\\
&= \int_0^T \int_{\Omega} \int_{Y_1} f^1(u_0^1) \phi dy dx dt + \int_0^T\int_{\Omega} \int_{\Gamma} h^1(u_0^1,u_0^2) \phi d\sigma_y dx dt
\\
&\hspace{2em} + \int_{\Omega}|Y_1|u_{0,i}^1 \phi(0) dx + \int_{\Omega} |\Gamma|u_{0,i,\Gamma}^1  \phi(0) dx
\end{align*}
Choosing $\phi$ with compact support in $(0,T)$ we get $\partial_t u_0^1 \in L^2((0,T),H^1(\Omega)')$ (see also Remark \ref{RemarkEnd})  with $u_0^1(0) = \frac{|Y_1| u_{0,i}^1 + |\Gamma| u_{0,i,\Gamma}^1}{|Y_1| +  |\Gamma|}$, and by density we obtain that $u_0^1$ is a weak solution of the macroscopic equation for $j=1$ in $\eqref{MacroscopicModel}$.  Uniqueness follows by standard energy estimates.
\end{proof}

\begin{remark}\label{RemarkEnd}\mbox{}
\begin{enumerate}
[label = (\roman*)]
\item The regularity of the time-derivative is also directly obtained from the \textit{a priori} estimates in Proposition \ref{PropositionAprioriEstimates}. In fact, define for $0<h \ll 1$ and $v:(0,T)\rightarrow X$ for a Banach space $X$ the difference quotient for $t \in (0,T-h)$
\begin{align*}
\partial_t^h v (t):= \frac{v(t+h) - v(t)}{h}.
\end{align*}
Then  for all $\phi \in C^{\infty}_0((0,T),C^{\infty}(\overline{\Omega}))$ it holds, due to Proposition \ref{PropositionConvergenceResults} and the \textit{a priori} estimates for the time-derivative in Proposition \ref{PropositionAprioriEstimates}:
\begin{align*}
\langle \partial_t^h u_0^1 , \phi &\rangle_{L^2((0,T-h),H^1(\Omega)'),L^2((0,T-h),H^1(\Omega))} = \int_0^{T-h} \int_{\Omega} \partial_t^h u_0^1 \phi dx dt
\\
&= \lim_{\epsilon \to 0 } \frac{1}{\vert Y_1 \vert + \vert \Gamma \vert } \left( \int_0^{T-h} \int_{\oeo} \partial_t^h \uepso \phi dx  + \epsilon \int_{\ge} \partial_t^h \uepso \phi d\sigma dt \right)
\\
&= \lim_{\epsilon \to 0} \frac{1}{\vert Y_1 \vert + \vert \Gamma \vert }\int_0^T \langle \partial_t^h \uepso , \phi \rangle_{\Hoeps',\Hoeps} dt.
\\
&\le \lim_{\epsilon \to 0} \frac{1}{\vert Y_1 \vert + \vert \Gamma \vert } \Vert \partial_t^h \uepso \Vert_{L^2((0,T),\Hoeps')} \Vert \phi \Vert_{L^2((0,T),\Hoeps)}
\\
&\le C \lim_{\epsilon \to 0} \Vert \partial_t \uepso \Vert_{L^2((0,T),\Hoeps')} \Vert \phi \Vert_{L^2((0,T),\Hoeps)}
\\
&\le C \lim_{\epsilon \to 0} \Vert \phi \Vert_{L^2((0,T),\Hoeps)} \le C \Vert \phi \Vert_{L^2((0,T),H^1(\Omega))},
\end{align*}
where at the end we used that $P_{\Gamma}$ is an orthogonal projection.
By density and the reflexivity of $L^2((0,T-h),H^1(\Omega))$ we obtain the boundedness
\begin{align*}
\Vert \partial_t^h u_0^1 \Vert_{L^2((0,T-h),H^1(\Omega)')} \le C,
\end{align*}
for a constant $C$ independent of $h$. This implies $\partial_t u_0^1 \in L^2((0,T),H^1(\Omega)')$. A similar argument implies $\partial_t u_0^2 \in L^2((0,T),H^1(\Omega)')$. However, the limit equation for $u_0^2$ even improves the regularity of $\partial_t u_0^2$.

\item We can also consider  the case of a connected-connected porous medium (for $n\geq 3$ and a domain $\Omega$ which can be decomposed in microscopic cells, for example a rectangle with integer side length, and an additional boundary condition on $\partial \ge$ is needed). In this case both macroscopic solutions are described by a reaction-diffusion equation as for $u_0^1$ in Theorem \ref{MainResultMacroscopicModel}. The derivation of the macroscopic model for the connected-connected case even gets simpler, because we only need the \textit{a priori} estimates from Proposition \ref{PropositionAprioriEstimates} and the convergence results for the connected domain in Section \ref{SectionConnectedDomain}. The estimates for the shifts in Proposition \ref{PropositionEstimateShifts} are no longer necessary.

\item The results can be easily extended to systems, see \cite{GahnDissertation} for more details.
\end{enumerate}
\end{remark}

\section{Discussion}

By the methods of two-scale convergence and the unfolding operator we derived a macroscopic model for a reaction-diffusion equation in a connected-disconnected porous medium with a nonlinear dynamic Wentzell-interface condition across the interface. The crucial point was to pass to the limit in the nonlinear terms, especially on the interface. Therefore, we established strong two-scale compactness results just depending on \textit{a priori} estimates for the sequence of solutions. For the proof we used the unfolding operator and a Banach-valued Kolmogorov-Simon-compactness argument, which was necessarily  for the disconnected domain. In fact, while the solutions in the connected domain $\oeo$ can be extended to the whole domain $\Omega$ preserving the \textit{a priori} estimates, this is not possible anymore for the disconnected domain.

We emphasize that the strong compactness result in Theorem \ref{MainResultsGeneralStrongTSConvergence} is not restricted to our specific problem, but on the \textit{a priori} estimates and the estimates for the shifts for the sequence. Therefore it can be easily applied to other problems. Especially, the results above  can be extended to systems in an obvious way.

The time-derivative in the Wentzell-boundary condition on the interface $\ge$ regularizes the problem and leads to a simple variational structure with respect to the Gelfand-triple $(\Hjeps, \Ljeps,\Hjeps')$, see $\eqref{VariationalEquationMicroscopicProblem}$. Hence, the problem seems to be more complex regarding stationary interface conditions (neglecting the time-derivative). On the other hand, neglecting the diffusion term on the surface leads to an ordinary differential equation on the surface. Hence, we loose spatial regularity on the surface and therefore we have to replace the space $\Hjeps$ by  the function space $\left\{(\ueps,\veps) \in H^1(\oej)\times L^2(\ge) \, : \, \ueps\vert_{\ge} = \veps \right\} $ with norm $\sqrt{\Vert \ueps\Vert_{H^1(\oej)}^2 + \epsilon \Vert \veps \Vert_{L^2(\ge)}^2}$. For this choice it could be expected that  the methods in the paper  can be adapted to the case without surface diffusion.
Nevertheless, both cases should be considered in more detail and are part of my ongoing work.

\subsection*{Acknowledgements}
The author was  supported  by   the Odysseus program of the Research Foundation - Flanders FWO (Project-Nr. G0G1316N) and the   project  SCIDATOS  (Scientific  Computing  for  Improved  Detection  and  Therapy  of  Sepsis), which was  funded  by  the  Klaus Tschira Foundation, Germany (Grant number 00.0277.2015).

\appendix

\section{Two-scale convergence and unfolding operator}
\label{SectionAppendix}

We repeat the definition of the two-scale convergence and the unfolding operator and summarize some well known properties and compactness results.

\subsection{Two-scale convergence}
\label{SectionTSConvergence}

In the following, unless  stated otherwise, we assume that $p \in (1,\infty)$ and $p'$ is the dual exponent of $p$. We start with the definition of the two-scale convergence, see \cite{Allaire_TwoScaleKonvergenz,Nguetseng}.

\begin{definition}
We say the sequence  $\ueps \in L^p((0,T)\times \Omega)$  converges in the two-scale sense (in $L^p$) to a limit function $u_0 \in L^p((0,T)\times Y)$, if for all $\phi \in L^{p'}((0,T)\times \Omega,C^0_{\per}(Y))$ it holds that
\begin{align*}
\lim_{\epsilon \to 0 } \int_0^T \int_{\Omega} \ueps(t,x) \phi\left(t,x,\fxe\right) dx dt = \int_0^T \int_{\Omega}\int_{Y} u_0(t,x,y) \phi(t,x,y) dx dy dt.
\end{align*}
We say the sequence converges strongly in the two-scale sense (in $L^p$), if it holds that
\begin{align*}
\lim_{\epsilon \to 0 } \|\ueps \|_{L^p((0,T)\times \Omega)} = \|u_0\|_{L^p((0,T)\times \Omega \times Y)}.
\end{align*}

\end{definition}

\begin{remark}\label{BemerkungTSKonvergenzNotation}\mbox{}
\begin{enumerate}
[label = (\roman*)]
\item For sequences in $L^p((0,T)\times \oej)$ on the perforated domain we also use the designation "two-scale convergence". The definition is also valid for such functions by extension by zero (or with the extension operator from \cite{Acerbi1992}), and considering suitable test-functions.
\item The two scale convergence introduced above should actually be referred to as "weak two scale convergence". However, in accordance with the definition in \cite{Allaire_TwoScaleKonvergenz} we neglect the word "weak" and only use "strong" to highlight the "strong two-scale convergence".
\item For the "two-scale convergence in $L^2$" we just write "two-scale convergence".
\end{enumerate}
\end{remark}
Next, we give the definition of the two-scale convergence on oscillating surfaces, see \cite{AllaireDamlamianHornung_TwoScaleBoundary,Neuss_TwoScaleBoundary}.
\begin{definition}
We say the sequence $\ueps \in L^p((0,T)\times \ge)$ converges in the two-scale sense (in $L^p$) to a limit function $u_0\in L^p((0,T)\times \Omega \times \Gamma)$, if for all $\phi \in C^0([0,T]\times \overline{\Omega},C_{\per}^0(\Gamma))$ it holds that
\begin{align*}
\lim_{\epsilon\to 0} \epsilon\int_0^T \int_{\ge} \ueps(t,x) \phi\left(t,x,\fxe\right) d\sigma_x dt = \int_0^T \int_{\Omega}\int_{\Gamma} u_0(t,x,y) \phi(t,x,y) d\sigma_y dx dt.
\end{align*}
We say the sequence converges strongly in the two-scale sense, if it holds that
\begin{align*}
\lim_{\epsilon \to 0 } \epsilon^{\frac{1}{p}}\|\ueps \|_{L^p((0,T)\times \ge)} = \|u_0\|_{L^p((0,T)\times \Omega \times \Gamma)}.
\end{align*}
In accordance with Remark \ref{BemerkungTSKonvergenzNotation}, we proceed analogously  for the two-scale convergence on $\ge$ and neglect the word "weak" and the addition "$L^2$".
\end{definition}

To pass to the limit $\epsilon \to 0$ in the diffusion terms in the bulk domain $\oej$ and the surface $\ge$ in the microscopic equation $\eqref{VariationalEquationMicroscopicProblem}$ we need compactness results for the spaces $\Hjeps$. In the following Lemma we summarize some weak two-scale compactness results for such functions, which can be found in \cite{Gahn2019}:
\begin{lemma}\label{LemmaTSConvergence}\mbox{}
For $j\in \{1,2\}$ let $\uepsj \in L^2((0,T),\Hjeps)$ be a sequence with
\begin{align*}
\|\uepsj\|_{L^2((0,T),\Hjeps)}  \le C.
\end{align*}
Then it holds:
\begin{enumerate}
[label = (\roman*)]
\item For $j=1$ there exist $u_0^1 \in L^2((0,T),H^1(\Omega))$ and a $Y$-periodic function $u_1^1 \in L^2((0,T)\times \Omega , \H_1/\R)$, such that up to a subsequence
\begin{align*}
\uepso &\rightarrow u_0^1 &\mbox{ in the two-scale sense}&,
\\
\nabla \uepso &\rightarrow \nabla_x u_0^1 + \nabla_y u_1^1 &\mbox{ in the two-scale sense}&,
\\
\uepso|_{\ge} &\rightarrow u_0^1 &\mbox{ in the two-scale sense on }&  \ge,
\\
\ngeps \uepso|_{\ge} &\rightarrow P_{\Gamma} \nabla u_0^1 + \ng u_1^1|_{\Gamma} &\mbox{ in the two-scale sense on }& \ge.
\end{align*}
\item For $j=2$ there exist $u_0^2 \in L^2((0,T)\times \Omega )$ and $u_1^2 \in L^2((0,T)\times \Omega,\H_2/\R)$ such that up to a subsequence
\begin{align*}
\uepst &\rightarrow u_0^2 &\mbox{ in the two-scale sense}&,
\\
\nabla \uepst &\rightarrow \nabla_y u_1^2 &\mbox{ in the two-scale sense}&,
\\
\uepst|_{\ge} &\rightarrow u_0^2 &\mbox{ in the two-scale sense on }& \ge,
\\
\ngeps \uepst|_{\ge} &\rightarrow \nabla_{\Gamma} u_1^2 &\mbox{ in the two-scale sense on }& \ge.
\end{align*}
\end{enumerate}
\end{lemma}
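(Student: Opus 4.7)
The plan is to split the bound $\|\uepsj\|_{L^2((0,T),\Hjeps)} \le C$ into three uniform estimates
\begin{align*}
\|\uepsj\|_{L^2((0,T),H^1(\oej))} + \sqrt{\epsilon}\|\uepsj\|_{L^2((0,T)\times\ge)} + \sqrt{\epsilon}\|\ngeps \uepsj\|_{L^2((0,T)\times\ge)} \le C,
\end{align*}
and derive the bulk convergence, the trace convergence, and the surface-gradient convergence in that order, each driven by the corresponding estimate.

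For the bulk part I would invoke the Allaire-Nguetseng compactness. In the connected case $j=1$ I first extend $\uepso$ via the operator of \cite{Acerbi1992,CioranescuSJPaulin} to $\widetilde{u}_\epsilon^1 \in L^2((0,T),H^1(\Omega))$ with bounds of the same order, extract a weak limit $u_0^1 \in L^2((0,T),H^1(\Omega))$, and then obtain the corrector $u_1^1 \in L^2((0,T)\times\Omega, H^1_{\per}(Y)/\R)$ from the standard two-scale compactness of the gradients. In the disconnected case $j=2$ the extension argument is not available, but two-scale compactness in $\oet$ still yields a limit, which decomposes into its $y$-average $u_0^2 \in L^2((0,T)\times\Omega)$ and a zero-mean corrector $u_1^2 \in L^2((0,T)\times\Omega,H^1_{\per}(Y_2)/\R)$; the gradient then converges two-scale to $\nabla_y u_1^2$ since $u_0^2$ is $y$-independent.

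For the trace, the $\sqrt{\epsilon}$-weighted estimate of $\uepsj$ on $\ge$ supplies a two-scale surface limit via the surface compactness of \cite{AllaireDamlamianHornung_TwoScaleBoundary,Neuss_TwoScaleBoundary}. I would identify this trace limit with the restriction to $\Gamma$ of the bulk two-scale limit cell by cell, using test functions of the form $\psi(t,x) + \epsilon \phi(t,x,x/\epsilon)$ and a standard trace inequality inside each $\epsilon$-cell. The additional regularity that places the corrector in $\Hj/\R$ rather than merely in $H^1(Y_j)/\R$ then comes from the surface-gradient estimate together with the same kind of oscillating test-function argument.

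The main obstacle is identifying the two-scale limit of $\ngeps \uepsj$ on $\ge$ as $P_\Gamma \nabla u_0^j + \ngy u_1^j|_\Gamma$ (and its $j=2$ analogue $\ngy u_1^2|_\Gamma$). My plan is to test against tangent-valued oscillating functions $\phi(t,x,x/\epsilon)$ with $\phi(t,x,y) \in T_y\Gamma$ and integrate by parts on the closed oscillating surface $\ge$. The surface divergence of $\phi(\cdot,\cdot,x/\epsilon)$ splits into a macroscopic $O(1)$ contribution involving $P_\ge \nabla_x \phi$ and a fast $O(\epsilon^{-1})$ contribution involving the intrinsic $\Gamma$-divergence of $\phi$ in $y$, plus curvature corrections; multiplying by the defining $\epsilon$-prefactor of the surface two-scale convergence and passing to the limit with the help of the previous steps produces the desired representation, after a density argument over tangent-valued test functions. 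The careful bookkeeping of tangential and normal components on the oscillating manifold is the technical heart of the argument; the full details are carried out in \cite{Gahn2019}.
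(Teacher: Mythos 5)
The first thing to note is that the paper does not prove Lemma \ref{LemmaTSConvergence} at all: it is introduced with ``\,we summarize some weak two-scale compactness results \ldots which can be found in \cite{Gahn2019}\,'' and no argument is given. So there is no in-paper proof to compare against, and since you also defer the technical core --- the simultaneous identification of one single corrector $u_1^1\in L^2((0,T)\times\Omega,\Ho/\R)$ whose volume gradient and whose surface gradient of the trace appear in the two limits --- to \cite{Gahn2019}, your proposal is best read as a plausible outline of that reference rather than an independent proof. The splitting of the $\Hjeps$-bound into bulk, trace and surface-gradient estimates, the extension argument for $j=1$, the identification of the surface limit of $\uepso|_{\ge}$ with $u_0^1$ via the trace inequality, and the integration by parts against tangent-valued oscillating fields on $\ge$ are all consistent with how such statements are established in the literature (\cite{Gahn2019} itself works mostly through the unfolding operator, i.e.\ weak compactness of $\te\uepsj$ in $L^2((0,T)\times\Omega,\Hj)$, but that is a presentational difference, not a substantive one).

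There is, however, one step whose stated justification does not support the conclusion. For $j=2$ you claim $\nabla\uepst\rightarrow\nabla_y u_1^2$ ``since $u_0^2$ is $y$-independent.'' That cannot be the reason: in the connected case the limit $u_0^1$ is also $y$-independent, yet the gradient limit contains the macroscopic part $\nabla_x u_0^1$. The absence of a macroscopic term for $j=2$ comes from the disconnectedness of $Y_2$. In general $u_0^2$ is only $L^2$ in $x$, so $\nabla_x u_0^2$ need not exist; in the duality argument one tests $\nabla\uepst$ against $y$-divergence-free fields compactly supported in $Y_2$, whose cell averages vanish by the divergence theorem, so no macroscopic gradient can be produced, and orthogonality to all such fields forces the limit to be a pure $y$-gradient $\nabla_y u_1^2$. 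Equivalently, since $\overline{Y_2}\subset Y$ no periodicity is imposed on $u_1^2$, so constant-in-$y$ fields are themselves $y$-gradients on $Y_2$ and any would-be macroscopic contribution is absorbed into $u_1^2$. Relatedly, your phrase ``decomposes into its $y$-average $u_0^2$ and a zero-mean corrector $u_1^2$'' conflates the limit of $\uepst$ with that of $\nabla\uepst$: the two-scale limit of $\uepst$ is already $y$-independent because $\epsilon\nabla\uepst\to 0$ annihilates the $y$-derivative of the limit, and $u_1^2$ enters only as the potential of the gradient limit (its membership in $\H_2/\R$ rather than $H^1(Y_2)/\R$ then following from the surface-gradient bound, as you indicate). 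With these points repaired, the sketch is acceptable at the level of detail the paper itself offers.
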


\subsection{The unfolding operator}

In the following we give the definition of the unfolding operator and summarize some well known properties, see the monograph \cite{CioranescuGrisoDamlamian2018}  for an overview about this topic, and also \cite{ArbogastDouglasHornung,BourgeatLuckhausMikelic,CioranescuDamlamianDonatoGrisoZakiUnfolding,Cioranescu_Unfolding2,VogtHomogenization}. In the following we consider the tuple $(G_{\epsilon},G) \in \{(\Omega,Y), (\oeo,Y_1),(\oet,Y_2),(\ge,\Gamma)\}$ and we define
\begin{align*}
\widehat{G}_{\epsilon}:= \mathrm{int} \bigcup_{k \in K_{\epsilon}} \epsilon \left( \overline{G} + k\right), \quad\quad \Lambda_{\epsilon}:= \Omega \setminus \overline{\widehat{G}_{\epsilon}}.
\end{align*}
Then, for $p\in (1,\infty)$ we define the unfolding operator  
\begin{align*}
\te : L^p((0,T)\times G_{\epsilon}) \rightarrow L^p((0,T)\times \Omega \times G),
\end{align*}
with
\begin{align*}
\te (\peps)(t,x,y):= \begin{cases}
\peps\left(t,\epsilon \left[\fxe\right] + \epsilon y \right) &\mbox{ for } x \in \widehat{G}_{\epsilon},
\\
0 &\mbox{ for } x \in \Lambda_{\epsilon}.
\end{cases}
\end{align*}
We emphasize that we use the same notation for the unfolding operator for the different choices of the tuple $(G_{\epsilon},G)$. It should be clear from the context in which sense it has to be understood. Further, we mention that unfolding operator commutes with the trace operator in the following sense: For $\peps \in L^p((0,T),W^{1,p}(\oej))$ for $j\in \{1,2\}$ it holds that
\begin{align*}
\te \big(\peps|_{\ge}\big) = \big(\te (\peps)\big)|_{\Gamma}.
\end{align*}

\begin{lemma}\label{PropertiesUnfoldingOperator}\mbox{}
\begin{enumerate}
[label = (\alph*)]
\item\label{PropertiesUnfoldingOperatorDomain} For $(G_{\epsilon},G)\in \{(\Omega,Y),(\oeo,Y_1),(\oet,Y_2)\}$ we have:
\begin{enumerate}
[label = (\roman*)]
\item \label{PropertiesUnfoldingOperatorDomainNorm}  For $\peps \in L^p((0,T)\times G_{\epsilon})$ it holds that
\begin{align*}
\| \te \peps \|_{L^p((0,T)\times \Omega \times G)} = \|\peps\|_{L^p((0,T)\times \widehat{G}_{\epsilon})}.
\end{align*}
\item For $\peps \in L^p((0,T),W^{1,p}(G_{\epsilon}))$ it holds that
\begin{align*}
\nabla_y \te \peps = \epsilon \te \nabla_x \peps.
\end{align*}
\end{enumerate}
\item\label{PropertiesUnfoldingOperatorSurface} For the unfolding operator on the surface we have:
\begin{enumerate}
[label = (\roman*)]
\item \label{PropertiesUnfoldingOperatorSurfaceNorm} For $\peps \in L^p((0,T)\times \ge)$ it holds that
\begin{align*}
\|\te \peps \|_{L^p((0,T)\times \Omega \times \Gamma)} = \epsilon^{\frac{1}{p}} \|\peps\|_{L^p((0,T)\times \ge)}.
\end{align*}
\item \label{PropertiesUnfoldingOperatorSurfaceGradient} For $\peps \in L^p((0,T), W^{1,p}(\ge))$ it holds that
\begin{align*}
\ngy \te \peps = \epsilon \te \ngeps \peps.
\end{align*}
\end{enumerate}
\end{enumerate}
\end{lemma}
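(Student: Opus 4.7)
The strategy for all four identities is the same: decompose the integration domain into its periodic translates and perform a linear change of variables on each cell. Throughout, I write $K_\epsilon$ (resp.\ $K_\epsilon^G$) for the index set of cells contained in $\Omega$ (resp.\ making up $\widehat{G}_\epsilon$). By construction, $\widehat{G}_\epsilon$ is the disjoint (up to null sets) union $\bigcup_{k} \epsilon(G+k)$, and on the cell $\epsilon(Y+k)$ the localizer satisfies $\epsilon\big[\tfrac{x}{\epsilon}\big] = \epsilon k$, so $\te\peps(t,x,y) = \peps(t,\epsilon k + \epsilon y)$ is independent of $x$ there.

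For part (a)(i), since $\te\peps$ vanishes on $\Lambda_\epsilon$, I split
\begin{align*}
\int_\Omega\!\int_G |\te\peps(t,x,y)|^p\,dy\,dx = \sum_{k\in K_\epsilon} \int_{\epsilon(Y+k)}\!\int_G |\peps(t,\epsilon k+\epsilon y)|^p\,dy\,dx.
\end{align*}
The $x$-integration contributes $|\epsilon(Y+k)| = \epsilon^n$, and the substitution $z = \epsilon k + \epsilon y$ in the $y$-integral contributes $\epsilon^{-n}$, so each summand becomes $\int_{\epsilon(G+k)} |\peps(t,z)|^p\,dz$. Summing and integrating in time yields the norm identity. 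For part (b)(i) the argument is identical, but now the surface measure of $\epsilon(\Gamma+k)$ is $\epsilon^{n-1}|\Gamma|$ instead of $\epsilon^n|Y|$, while the $x$-integration over $\epsilon(Y+k)$ still produces $\epsilon^n$; the mismatched factor is the extra $\epsilon$, producing the $\epsilon^{1/p}$ on the right-hand side.

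For the gradient identities (a)(ii) and (b)(ii) I first take $\peps \in C^\infty$, prove the identities pointwise, and conclude by the density of smooth functions in $L^p((0,T),W^{1,p}(G_\epsilon))$ and $L^p((0,T),W^{1,p}(\ge))$ combined with the continuity just established in (i). On a fixed cell, differentiating the equality $\te\peps(t,x,y) = \peps(t,\epsilon k + \epsilon y)$ in $y$ via the chain rule gives
\begin{align*}
\nabla_y \te\peps(t,x,y) = \epsilon\,(\nabla\peps)(t,\epsilon k + \epsilon y) = \epsilon\,\te(\nabla\peps)(t,x,y),
\end{align*}
which is (a)(ii). For the tangential identity (b)(ii), I use the $Y$-periodic extension of the unit normal: at $z = \epsilon k + \epsilon y \in \ge$ one has $\nu(z/\epsilon) = \nu(k+y) = \nu(y)$, hence the projection $P_{\ge}(z)$ coincides with $P_\Gamma(y)$. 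Applying $P_\Gamma(y)$ to both sides of the chain-rule identity above and using that projection and unfolding commute (because the projection only depends on $y$) yields $\ngy \te\peps = \epsilon\,\te(\ngeps\peps)$.

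The only mildly delicate point is the passage from smooth to Sobolev-class functions for (ii): I need to know that $\te$ is bounded $L^p((0,T),W^{1,p}(G_\epsilon)) \to L^p((0,T)\times\Omega, W^{1,p}(G))$ with norm controlled by $\epsilon^{-1}$, which is immediate from (a)(i) applied to $\peps$ and to its cell-wise derivative. Everything else is bookkeeping; there is no analytical obstacle since the identities hold cell-by-cell.
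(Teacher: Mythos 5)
Your proof is correct, but note that the paper does not actually prove this lemma: it simply cites \cite{CioranescuGrisoDamlamian2018} for (a) and (b)(i) and \cite{GrafPeterDiffusionOnSurfaces} for (b)(ii). What you have written is the standard cell-by-cell change-of-variables argument that those references carry out, so your proposal supplies the self-contained computation the paper delegates to the literature. Two small points are worth making explicit. First, in (a)(i) the absence of a multiplicative constant on the right-hand side rests on $|Y|=1$ (the $x$-integration must run over the \emph{full} cells $\epsilon(Y+k)$, contributing $\epsilon^n|Y|$, while the $y$-substitution over $G$ contributes $\epsilon^{-n}$; for a general reference cell a factor $|Y|^{1/p}$ would appear), and correspondingly in (b)(i) the surviving factor is $\epsilon^n|Y|\cdot\epsilon^{-(n-1)}=\epsilon$, exactly as you say. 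Second, in the density step for (b)(ii) you should phrase the limit passage so as to avoid any appearance of circularity: for smooth $\phi_\epsilon^n\to\phi_\epsilon$ in $W^{1,p}(\Gamma_\epsilon)$ one has $\te\phi_\epsilon^n\to\te\phi_\epsilon$ and $\nabla_{\Gamma,y}\te\phi_\epsilon^n=\epsilon\,\te(\ngeps\phi_\epsilon^n)\to\epsilon\,\te(\ngeps\phi_\epsilon)$ in $L^p$, both by (b)(i) alone, and the identity for $\phi_\epsilon$ then follows from the closedness of the weak tangential gradient --- no \emph{a priori} boundedness of $\te$ into $L^p(\Omega,W^{1,p}(\Gamma))$ is needed. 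Your observation that $P_{\ge}(\epsilon k+\epsilon y)=P_\Gamma(y)$ because the periodically extended normal satisfies $\nu((\epsilon k+\epsilon y)/\epsilon)=\nu(y)$ is precisely the reason the affine rescaling intertwines the two tangential gradients; this is the content of the computation in \cite{GrafPeterDiffusionOnSurfaces}.
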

\begin{proof}
For \ref{PropertiesUnfoldingOperatorDomain} and \ref{PropertiesUnfoldingOperatorSurface}\ref{PropertiesUnfoldingOperatorSurfaceNorm} see \cite{CioranescuGrisoDamlamian2018}. A proof for \ref{PropertiesUnfoldingOperatorSurface}\ref{PropertiesUnfoldingOperatorSurfaceGradient} can be found in \cite{GrafPeterDiffusionOnSurfaces}.
\end{proof}
In the following Lemma we give an equivalent relation between the unfolding operator and the two-scale convergence. For a proof see for example \cite{BourgeatLuckhausMikelic,CioranescuDamlamianDonatoGrisoZakiUnfolding,CioranescuGrisoDamlamian2018}.
\begin{lemma}\label{PropAequivalenzTSKonvergenzUnfolding}
Let $p \in (1,\infty)$.
\begin{enumerate}
[label = (\alph*)]
\item For $(G_{\epsilon},G)\in \{(\Omega,Y),(\oeo,Y_1),(\oet,Y_2)\}$ and a sequence $\ueps \in L^p((0,T)\times G_{\epsilon})$, the following statements are equivalent:
\begin{enumerate}
\item $\ueps \rightarrow u_0$ weakly/strongly in the two-scale sense in $L^p$,
\item $\te \ueps \rightarrow u_0$ weakly/strongly in $L^p((0,T)\times \Omega \times G)$.
\end{enumerate}
\item  For a sequence $\ueps \in L^p((0,T)\times \ge)$ with $\epsilon^{\frac{1}{p}}\|\ueps\|_{L^p((0,T)\times \ge)} \le C$, the following statements are equivalent:
\begin{enumerate}
\item $\ueps \rightarrow u_0$ weakly/strongly in the two-scale sense on $\ge$ in $L^p$,
\item $\te \ueps \rightarrow u_0$ weakly/strongly in $L^p((0,T)\times \Omega \times \Gamma)$.
\end{enumerate}
\end{enumerate}
\end{lemma}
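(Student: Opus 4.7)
The plan is to reduce everything to a single change-of-variables identity and then deduce the four implications (weak/strong, bulk/surface) from it.

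Step 1 (Key identity). For a test function $\phi \in C^0([0,T]\times \overline{\Omega}, C^0_{\per}(Y))$, unfolding the $x$-dependent composition and exploiting $Y$-periodicity of $\phi$ in the last slot (since $[x/\epsilon] + y$ and $y$ differ by an element of $\Z^n$) yields
\begin{align*}
\te\bigl[\phi(t,\cdot_x,\cdot_x/\epsilon)\bigr](t,x,y) = \phi\bigl(t,\epsilon[x/\epsilon] + \epsilon y,y\bigr).
\end{align*}
Combining this with the multiplicativity $\te(\ueps \psi_\epsilon) = \te\ueps \cdot \te\psi_\epsilon$ and the integral-preservation $\int_{\widehat{G}_\epsilon} f \, dx = \int_\Omega \int_G \te f \, dy \, dx$ (which is the $p=1$ form of Lemma~\ref{PropertiesUnfoldingOperator}\ref{PropertiesUnfoldingOperatorDomain}\ref{PropertiesUnfoldingOperatorDomainNorm}) gives
\begin{align*}
\int_0^T \!\!\int_{\widehat{G}_\epsilon} \!\!\ueps \, \phi\!\left(t,x,\tfrac{x}{\epsilon}\right) dx\, dt = \int_0^T \!\!\int_\Omega \!\!\int_G \te\ueps(t,x,y)\, \phi\bigl(t,\epsilon[x/\epsilon] + \epsilon y,y\bigr) dy\, dx\, dt.
\end{align*}
The analogous surface identity, carrying an extra prefactor $\epsilon$ on the left, follows from Lemma~\ref{PropertiesUnfoldingOperator}\ref{PropertiesUnfoldingOperatorSurface}\ref{PropertiesUnfoldingOperatorSurfaceNorm}. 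Because $|\Lambda_\epsilon| = O(\epsilon)$ and because uniform continuity of $\phi$ gives $\phi(\cdot,\epsilon[\cdot/\epsilon] + \epsilon\cdot,\cdot) \to \phi$ uniformly (hence in $L^{p'}((0,T)\times \Omega \times G)$), the identity is valid up to an $o(1)$ error once $\ueps$ is bounded in $L^p$.

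Step 2 (Weak equivalence for part (a)). Assume $\te\ueps \rightharpoonup u_0$ in $L^p((0,T)\times \Omega \times G)$. Pairing this weakly convergent sequence against the strongly convergent $\phi(\cdot,\epsilon[\cdot/\epsilon] + \epsilon\cdot,\cdot)$ and applying Step 1 produces precisely the defining relation of two-scale convergence. For the converse, I would first derive uniform boundedness of $\ueps$ from two-scale convergence by the Banach--Steinhaus theorem applied to the pairings; the isometry $\|\te \ueps\|_{L^p(\Omega\times G)} = \|\ueps\|_{L^p(\widehat{G}_\epsilon)}$ then gives a weakly convergent subsequence $\te\ueps \rightharpoonup \tilde u$. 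The first direction forces $\tilde u = u_0$, and because this limit is independent of the subsequence, the whole sequence converges weakly.

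Step 3 (Strong equivalence and surface case). Since $L^p$ with $p \in (1,\infty)$ is uniformly convex, weak convergence plus convergence of norms is equivalent to strong convergence. The isometry from Lemma~\ref{PropertiesUnfoldingOperator}\ref{PropertiesUnfoldingOperatorDomain}\ref{PropertiesUnfoldingOperatorDomainNorm}, together with $|\Omega \setminus \widehat{G}_\epsilon| \to 0$ and weak lower semicontinuity of the norm, shows that $\|\te\ueps\|_{L^p(\Omega \times G)} \to \|u_0\|_{L^p}$ if and only if $\|\ueps\|_{L^p(G_\epsilon)} \to \|u_0\|_{L^p}$; combined with Step 2 this yields strong equivalence. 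For part (b), the same three steps go through, now using the surface identity and the $\epsilon^{1/p}$ isometry from Lemma~\ref{PropertiesUnfoldingOperator}\ref{PropertiesUnfoldingOperatorSurface}\ref{PropertiesUnfoldingOperatorSurfaceNorm}; the hypothesis $\epsilon^{1/p}\|\ueps\|_{L^p(\ge)} \le C$ is exactly what is needed to guarantee boundedness of $\te\ueps$ in $L^p(\Omega \times \Gamma)$ so that the subsequence extraction argument of Step 2 applies.

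The only genuinely delicate point is the boundary layer: when $G_\epsilon \neq \widehat{G}_\epsilon$ (which is always the case here because neither $\oej$ nor $\ge$ coincides with the unfolded domain) one must check that the contributions from $\Lambda_\epsilon$ on both the test function side and the norm side are negligible. Both estimates reduce to the uniform $L^p$-bound of $\ueps$ and the fact that $|\Lambda_\epsilon| \to 0$ (respectively $\sigma(\ge \cap \Lambda_\epsilon) = O(\epsilon^{-1}\cdot\epsilon)$ after weighting). Apart from this bookkeeping, the proof is purely algebraic once the unfolding identity of Step 1 is in hand.
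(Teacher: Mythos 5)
The paper offers no proof of this lemma at all --- it is quoted from the literature with a pointer to \cite{BourgeatLuckhausMikelic,CioranescuDamlamianDonatoGrisoZakiUnfolding,CioranescuGrisoDamlamian2018} --- so your argument can only be measured against the standard proofs there, and in outline it is exactly that: the exact unfolding identity for $\int_{\widehat G_\epsilon} \ueps\, \phi(t,x,x/\epsilon)\,dx$, the isometry of $\te$, Banach--Steinhaus plus reflexivity of $L^p$ for the weak equivalence, and uniform convexity (Radon--Riesz) for the strong one. Those steps are sound, and your remark that the explicit hypothesis $\epsilon^{1/p}\|\ueps\|_{L^p((0,T)\times\ge)}\le C$ in part (b) is what substitutes for the duality argument (unavailable when the test functions are only continuous in $x$) is exactly right.

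The gap is in your closing paragraph. The unfolded function $\te\ueps$ carries no information whatsoever about $\ueps$ on $G_\epsilon\cap\Lambda_\epsilon$, and your claim that the boundary-layer contributions ``reduce to the uniform $L^p$-bound of $\ueps$ and $|\Lambda_\epsilon|\to 0$'' fails precisely where it matters most: in the implication ``$\te\ueps\to u_0$ strongly $\Rightarrow$ $\ueps\to u_0$ strongly in the two-scale sense'' you must show $\|\ueps\|_{L^p((0,T)\times G_\epsilon)}\to\|u_0\|_{L^p((0,T)\times\Omega\times G)}$, whereas the isometry only gives convergence of $\|\ueps\|_{L^p((0,T)\times\widehat G_\epsilon)}$. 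A uniformly bounded sequence can keep a fixed amount of $L^p$-mass on the shrinking layer (add $\chi_{\Lambda_\epsilon\cap G_\epsilon}|\Lambda_\epsilon\cap G_\epsilon|^{-1/p}$ to a nice sequence: $\te\ueps$ and the weak two-scale limit are unchanged, but the $L^p((0,T)\times G_\epsilon)$-norms do not converge to $\|u_0\|$). H\"older together with $|\Lambda_\epsilon|\to 0$ does kill the layer in every \emph{pairing} against a bounded test function --- which is why your weak arguments and the converse strong direction (where weak lower semicontinuity of the norm under unfolding squeezes the layer mass to zero) are correct --- but not in the norm itself. This is the well-known caveat in the unfolding literature, where the equivalence is stated with the additional condition $\int_{\Lambda_\epsilon}|\ueps|^p\,dx\to 0$ or for sequences given a priori bounded on all of $G_\epsilon$. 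Note that for $(\oet,Y_2)$ and for $\ge$ the issue is vacuous, since by construction these sets are contained in the union of whole cells $\epsilon(Y+k)$, $k\in K_\epsilon$; it bites only for $(\Omega,Y)$ and $(\oeo,Y_1)$. To repair the proof you should either add this no-concentration condition to the statement you prove, or verify it for the specific sequences to which the lemma is applied.
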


\bibliographystyle{abbrv}
\bibliography{literature}

\end{document}